\documentclass[11pt]{article}

\usepackage{times}
\usepackage{amsmath,amsfonts,amstext,latexsym,amssymb,amsbsy,amsopn,amsthm,eucal,slashed,amstext,enumerate,accents}
\usepackage{mathrsfs}
\usepackage[T1]{fontenc}
\usepackage{amssymb}
\usepackage{cases}
\usepackage{color}
\usepackage{txfonts}
\usepackage{dsfont}
\usepackage{graphicx}   \usepackage{comment}
\usepackage{amsthm}

\allowdisplaybreaks[4]
\usepackage[colorlinks,
linkcolor=blue,
anchorcolor=blue,
citecolor=blue,
]{hyperref}

\begin{document}

	
	\setlength{\abovedisplayshortskip}{3mm}
	\setlength{\belowdisplayshortskip}{3mm}
	\setlength{\abovedisplayskip}{3mm}
	\setlength{\belowdisplayskip}{3mm}
	
	\newtheorem{theorem}{Theorem}[section]
	\newtheorem{corollary}[theorem]{Corollary}
	\newtheorem{lemma}[theorem]{Lemma}
	\newtheorem{definition}[theorem]{Definition}
	\newtheorem{proposition}[theorem]{Proposition}
	\newtheorem{remark}[theorem]{Remark}
	\newtheorem{example}[theorem]{Example}
	\newtheorem{theoremalph}{Theorem}
	\renewcommand\thetheoremalph{\Alph{theoremalph}}
	\newtheorem{assumption}{Assumption}[section]

	\newenvironment{sequation}{\begin{equation}\small}{\end{equation}}
	\newenvironment{tequation}{\begin{equation}\tiny}{\end{equation}}
	\renewcommand{\thefootnote}{\fnsymbol{footnote}}
	

	\title
	{Gradient estimates and Liouville theorems for the \(\Phi\)-Laplacian equations on Riemannian manifolds }
	\author{
		Yu-Zhao Wang\footnote{Corresponding author: Research of Y.-Z. Wang has been supported by Fundamental Research Program of Shanxi Province(No. 202303021211001) .}
		\quad Jian-Hua Hao}
	
	\date{}

	\maketitle
	\maketitle \numberwithin{equation}{section}
	\maketitle \numberwithin{theorem}{section}
	\setcounter{tocdepth}{2}
	\setcounter{secnumdepth}{2}

	\begin{abstract}
		This paper establishes gradient estimates and Liouville-type theorems for the \(\Phi\)-Laplacian equation \(\Delta_{\Phi}(u) = G(|\nabla u|^2)\) on complete Riemannian manifolds and its parabolic counterpart \(\partial_t u = \Lambda_{\Phi}(u)\) on compact Riemannian manifolds. 
		Using a nonlinear \(\Phi\)-Bochner formula and the Nash-Moser iteration technique, we prove local gradient bounds under the lower bound assumption of Ricci curvature and suitable conditions on \(\Phi\) and \(G\), which leads to Liouville theorems for global solutions. 
		For the parabolic case, we employ the maximum principle to derive gradient estimates on compact Riemannian manifolds, and subsequently obtain Liouville-type results. Our work provides a unified framework that generalizes prior results for \(p\)-harmonic functions and other quasilinear equations.
		
		\vspace{3mm}
		\noindent
		\textbf{Mathematics Subject Classification (2020)}. Primary 58J35,	58J65; Secondary 35K92.

		\vspace{3mm}	
		\noindent\ \textbf{Keywords}: $\Phi$-Laplacian, $\Phi$-Bochner formula, Gradient estimate, Li-Yau estimates, Nash-Moser iteration,  Liouville-type theorem
		
	\end{abstract}

	\section{\textbf{Introduction}}
	\quad Gradient estimates are fundamental and powerful tools in geometric analysis. In 1975,  Cheng and Yau \cite{Cheng1975,Yau1975} first proved the gradient estimate for positive harmonic functions on Riemannian
	manifolds. More precisely, if \(u\) is a positive solution to \(\Delta u=0\) on a complete Riemannian manifold \((M,g)\) with \({\rm Ric}_g\geq-(n-1)\kappa(\kappa\geq0)\), then on a geodesic ball $B(o, R)$ we have
	\begin{equation}\label{eq:Cheng-Yau}
		\frac{|\nabla u|}{u}\leq C_n\left(\frac{1+\sqrt{\kappa}R}{R}\right).
	\end{equation}
	In 1986,  Li and Yau \cite{Li1986} established  the following estimate (the Li-Yau estimate) for positive solutions to the heat equation \(\partial_tu=\Delta u\)  on an $n$-dimensional compact Riemannian manifold \(M\) with \({\rm Ric}_g\geq0\) 
	\begin{equation}\label{eq:Li-Yau}
		\frac{|\nabla u|^2}{u^2}-\frac{u_t}{u}\leq\frac{n}{2t}.
	\end{equation}
	Later,
	similar techniques were employed by R. Hamilton in the study of the Ricci flow, mean curvature flow and other geometric evolution equations; see the survey paper \cite{Ni2007}.  In particular,  Kotschwar and Ni \cite{Kotschwar2009} proved a  Cheng-Yau type estimate for \(p\)-harmonic equation  $\Delta_pu=0$ on complete Riemannian manifolds with
	the sectional curvature bounded from below via the classic maximum principle.
	Wang-Zhang\cite{Wang2011} extended this result to the case of Ricci curvature by using the Nash-Moser iteration technique. 
	
	In the same paper \cite{Kotschwar2009}, Kotschwar and Ni derived a Li-Yau type estimate for the \(p\)-Laplace heat equation 
	\begin{equation}\label{pheat}
		\frac{\partial v^{p-1}}{\partial t}=(p-1)^{p-1}{\rm div}\left(|\nabla v|^{p-2}\nabla v\right)
	\end{equation}
	on a compact Riemannian manifold with nonnegative Ricci curvature
	\begin{equation}\label{eq:ni estimate}
		(p-1)^{p}\frac{|\nabla v|^p}{v^p}-(p-1)\frac{v_t}{v}\leq\frac{n}{pt}.
	\end{equation}
	
	A natural question is whether gradient estimates can be obtained for other nonlinear
	equations. 
	In this research field, many scholars have obtained corresponding results, for instance, for the porous medium equation, fast diffusion equation\cite{LNVV2009}, doubly nonlinear diffusion equation \cite{WC2014} and so on. Inspired by the above works, we aim to study gradient estimates for more general nonlinear equations.

	In 1994, Caffarelli, et al \cite{Caffarelli1994} considered the following quasilinear
	equation in $\mathbb{R}^n$
	\begin{equation}\label{eq:quasi-linear 1.0}
		\Delta_{\Phi}(u)=F'(u)=f(u),
	\end{equation}
	where 
	\begin{equation}\label{PhiLa}
		\Delta_{\Phi}(u):={\rm div}\left(\Phi'(|\nabla u|^2)\nabla u\right)
	\end{equation}
	is called the $\Phi$-Laplacian. In particular, if $\Phi(x)=\frac{2}{p}x^{\frac{p}{2}}$, then $\Delta_{\Phi}(u)={\rm div}\left(|\nabla u|^{p-2}\nabla u\right)=\Delta_pu$. 
	The  solutions of equation \eqref{eq:quasi-linear 1.0} can be viewed as the
	critical points of the functional defined by
	\begin{equation}
		J(u):=\int_{\mathbb{R}^n}\left(\frac{1}{2}\Phi(|\nabla u|^2)+F(u)\right)dx,
	\end{equation}
	where the function \(\Phi\) in \eqref{eq:quasi-linear 1.0} is assumed to be normalized with \(\Phi(0)=0\). Using the
	maximum principle to some $P$-function, a Liouville-type theorem for the equation \eqref{eq:quasi-linear 1.0} was established \cite{Caffarelli1994}, along with the asymptotic behavior and uniqueness of its solutions.

	In the first part of this paper, we consider the following nonlinear elliptic equation on a complete Riemannian manifold
	\begin{equation}\label{eq:elliptic 1.1}
		\Delta_{\Phi}(u)=G(|\nabla u|^2).
	\end{equation}
	Throughout this paper, \(\Phi(x)\) and \(G(x)\) are assumed to be non-negative functions. When \(\Phi(x)=\frac{2}{p}x^{\frac{p}{2}}\) and \( G(x)=x^{\frac{q}{2}}\), then the equation \eqref{eq:elliptic 1.1} reduces to
	\begin{equation}\label{eq:elliptic variant}
		\Delta_pu-|\nabla u|^q=0,
	\end{equation}
	which is known as the quasi-linear Hamilton-Jacobi equation . When $p=q$, by setting $v=-(p-1)\log u$,  equation \eqref{eq:elliptic variant} reduce to the $p$-harmonic equation $\Delta_pv=0$.
	
	Using the Bernstein technique, Lions \cite{Lions1985} demonstrated that any \(C^2\) solution to \eqref{eq:elliptic variant} on \(\mathbb{R}^n\) with \(q>1\) and \(p=2\) must be constant. Bidaut-Veron, Garcia-Huidobro and Veron \cite{Bidaut-Veron2014} established the gradient estimate of solutions of \eqref{eq:elliptic variant} and obtained some Liouville-type theorems. Moreover, they extended their estimates to solutions of equation \eqref{eq:elliptic variant} on complete non-compact manifolds \((M^n,g)\)  under lower bounds depending on the Ricci curvature and sectional curvature etc.
	
	Recently, Y. Wang and his coauthors  \cite{Han2026} employed the Nash-Moser iteration to investigate gradient estimates for the quasi-linear equation
	\begin{equation}\label{eq:wang estimate}
		\Delta_pu+A|\nabla u|^q+B|u|^{r-1}u+C=0.
	\end{equation}
	It is shown that if the Ricci curvature is bounded from below and \(q>p-1>0\), then the equation \eqref{eq:wang estimate} satisfies the estimate
	\[
	\sup_{B_{R/2(o)}}|\nabla u|\leq C_{n,p,q,r}\left(\frac{1+\sqrt{\kappa}R}{R}\right)^{\frac{1}{q-p+1}}.
	\]
	Furthermore, in \cite{He2024,Shen2025,Wang2023c,Wang2025a,Wang2025b}, the authors obtained gradient estimates for various PDEs by employing the Nash-Moser iteration technique, while in \cite{Lin2024,Ma2026,WangZhang2025}, X. Ma and his coauthors established Liouville-type theorems for  \(p\)-Laplace type equations via the integral identity techniques. While in \cite{Sun2022}, Y. Sun and his coauthors studied second order quasilinear elliptic inequalities via a different approach and proved a sharp Liouville-type theorem.

	{A key distinction lies in the geometric assumptions: applying the Bernstein method necessitates the use of barrier functions and comparison theorems, which generally imposes constraints on the sectional curvature (see \cite{Bidaut-Veron2014, Ni2007}). In contrast, the Nash-Moser iteration method requires only requires the Ricci curvature. So we use the Nash-Moser iteration method to study the equation \eqref{eq:elliptic 1.1} on a complete Riemannian manifold with Ricci curvature bounded from below and obtained a local gradient estimate. A new feature of our derivation of the local gradient estimate is a nonlinear \(\Phi\)-Bochner type formula relating the nonlinear operator with its linearization.}
	
	\begin{theorem}\label{theorem:elliptic equation key}
		Let \((M^n,g)\) be a complete Riemannian manifold with \({\rm Ric}_g\geq-(n-1)\kappa g\) for some constant \(\kappa\geq0\). For any solution \(u\in B_R(o)\subset M\) to equation \eqref{eq:elliptic 1.1}, where \(B_R(o)\) denotes the geodesic ball at \(o\) with radius $R$.  Suppose that \(G>0\) on \(\mathbb{R}^{+}\), 
		and  \(\Phi,\Phi'\) satisfies Assumption \ref{a1}, define
		$$
		h(x):=\frac{\Phi''(x)}{\Phi'(x)},\quad and \quad \theta(x)=xh(x).
		$$
		If 
		\[
		\sup_{x\geq0}\left\{\left|\frac{(1+2xh(x))}{n-1}-x\frac{G'(x)}{G(x)}\right|\right\}\leq \delta,\quad G(x)-b_0\Phi'(x)x\geq0, 
		\]
		and 
		\[
		-\frac{1}{2}<b_1\leq \theta(x)\leq b_2<+\infty,
		\]
		then we have 
		\begin{equation}\label{Phiest}
			\sup_{B_{R/2}(o)}|\nabla u|\leq C_{n,\Phi,G}\left(\frac{1+\sqrt{\kappa}R}{R}\right),
		\end{equation}
		where \(\delta\), \(b_0\), \(b_1\), \(b_2\) are  constants and the constant \(C_{n,\Phi,G}\) depends on \(n,\Phi,G\).
	\end{theorem}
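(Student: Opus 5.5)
We prove \eqref{Phiest} by a Nash--Moser iteration for the quantity $f:=|\nabla u|^2$. The structure is the same as in Wang--Zhang for $p$-harmonic functions and in \cite{Han2026} for \eqref{eq:wang estimate}. The two new ingredients are the nonlinear $\Phi$-Bochner formula and the structural hypotheses on $\theta$ and $G$.

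\textbf{Step 1: the linearized operator and a $\Phi$-Bochner inequality.} Consider the linearization of $\Delta_\Phi$ at $u$,
\[
\mathcal{L}(\psi)={\rm div}\bigl(\Phi'(f)\,A(\nabla\psi)\bigr),\qquad A:={\rm Id}+2h(f)\,\nabla u\otimes\nabla u .
\]
The condition $-\tfrac12<b_1\le\theta\le b_2$ makes $A$ uniformly elliptic, with eigenvalues between $\min\{1,1+2b_1\}$ and $\max\{1,1+2b_2\}$. Differentiate \eqref{eq:elliptic 1.1} in the direction $\nabla u$ and use the Ricci identity. This gives a Bochner formula of the form
\[
\tfrac12\mathcal{L}(f)=\Phi'(f)\bigl(|\nabla^2u|^2_A+{\rm Ric}(\nabla u,\nabla u)\bigr)+\langle\nabla u,\nabla G(f)\rangle+(\text{terms in }h,\nabla f),
\]
where $|\nabla^2u|^2_A$ is a weighted Hessian norm. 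Next bound the Hessian from below by splitting $\nabla^2u$ into its $\nabla u$-direction and the orthogonal block, as Kotschwar--Ni do. Use the equation to express the trace: one has $\Phi'(f)\bigl(\Delta u+2h\,\nabla^2u(\nabla u,\nabla u)\bigr)=G(f)$, so $u_{11}(1+2\theta)+\sum_{i\ge2}u_{ii}=G/\Phi'$. Then Cauchy--Schwarz gives
\[
\Phi'(f)|\nabla^2u|^2\ \ge\ \frac{G(f)^2}{(n-1)\Phi'(f)}-\frac{2(1+2\theta)}{n-1}\,G\,u_{11}+\cdots .
\]
The term $\langle\nabla u,\nabla G\rangle=2G'(f)f\,u_{11}$ is to be combined with the cross term $-\frac{2(1+2\theta)}{n-1}G u_{11}$. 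Their sum is $2G u_{11}\bigl(\tfrac{fG'}{G}-\tfrac{1+2\theta}{n-1}\bigr)$. The hypothesis bounding this bracket by $\delta$ lets us absorb it into $|\nabla f|^2/f$-type terms, using $u_{11}\sim\langle\nabla f,\nabla u\rangle/f$. This assumes $\delta$ is small relative to the constants, or is chosen compatibly with them. Finally $G\ge b_0\Phi'(f)f$ gives $\frac{G^2}{(n-1)\Phi'}\ge \frac{b_0^2}{n-1}\Phi'(f)f^2$. The outcome is a differential inequality
\[
\tfrac12\mathcal{L}(f)\ \ge\ \beta\,\Phi'(f)f^2-(n-1)\kappa\,\Phi'(f)f+c\,\Phi'(f)\frac{|\nabla f|^2_A}{f}-C\,\Phi'(f)|\nabla f|\cdots,
\]
valid where $f>0$, with $\beta>0$ depending on $n,b_0,b_1,b_2,\delta$.

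\textbf{Step 2: integral inequality and the $L^\beta$ bound.} Multiply by $\Phi'(f)^{-1}$-adjusted test functions $f^{\alpha}\eta^2$ with a cutoff $\eta$, and integrate by parts against the divergence form of $\mathcal{L}$. Some care is needed here. The weight $\Phi'(f)$ is comparable to $f^{\theta}$-type powers, since $\theta=f\,(\log\Phi')'$ lies between $b_1$ and $b_2$; hence $\Phi'(f)$ behaves like a power between $f^{b_1}$ and $f^{b_2}$ up to constants, after normalizing at $f=1$. Then use the Saloff-Coste Sobolev inequality on $B_R$ under ${\rm Ric}\ge-(n-1)\kappa$, with constants $e^{C(1+\sqrt\kappa R)}V^{-2/n}R^2$. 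Choosing $\alpha_0\sim C(1+\sqrt\kappa R)$ large, the good term $\beta\int f^{\alpha+2}\Phi'\eta^2$ dominates the $\kappa$ and cutoff terms. This yields
\[
\|f\|_{L^{\gamma}(B_{3R/4})}\le C\,V^{1/\gamma}\,\frac{(1+\sqrt\kappa R)^2}{R^2},\qquad \gamma\sim\alpha_0 .
\]

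\textbf{Step 3: Moser iteration.} Drop the good term and iterate with $\alpha_k=\gamma\chi^k$, where $\chi=n/(n-2)$, on shrinking balls $r_k=R/2+R/4^{k}$. This gives $\sup_{B_{R/2}}f\le C\|f\|_{L^\gamma}V^{-1/\gamma}$, and combining with Step 2 yields \eqref{Phiest}.

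\textbf{Main obstacle.} The hard part is Step 1 together with its use in Step 2. Absorbing the $G'$ cross term via $\delta$ requires precise constants. Keeping the inequality homogeneous when $\Phi'$ is not a pure power requires the two-sided comparison $\Phi'(f)\approx f^{\theta}$ coming from the bounds on $\theta$. I would first carry out the Hessian decomposition exactly, in orthonormal frames with $e_1=\nabla u/|\nabla u|$, before optimizing the constants.
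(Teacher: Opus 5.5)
Your overall architecture matches the paper's: the Bochner formula for the linearization, a frame with $e_1=\nabla u/|\nabla u|$ and the equation to eliminate $\sum_{i\ge2}u_{ii}$, the $\delta$-hypothesis on the combined cross term, $G\ge b_0\Phi'(f)f$ for coercivity, Saloff-Coste, an $L^{\alpha_1}$ bound on $B_{3R/4}$, and Moser iteration. However, the way you dispose of the cross term $2Gu_{11}\bigl(\frac{fG'}{G}-\frac{1+2\theta}{n-1}\bigr)$ does not give the theorem as stated. You absorb $2\delta G|u_{11}|$ pointwise, by Young's inequality, into $\frac{G^2}{(n-1)\Phi'}$ and the Hessian term $\frac{n}{n-1}(1+2\theta)^2\Phi'u_{11}^2$. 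These are the only terms available pointwise, and their coefficients are fixed. So this forces $\delta$ to be small in terms of $n$ and $b_1$ (e.g.\ $\delta^2<\frac{n(1+2b_1)^2}{(n-1)^2}$ for the $A$-weighted Hessian), whereas the theorem allows any finite $\delta$.

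The missing idea is to postpone the absorption. Keep the error term $-2\delta f^{-1/2}\Phi'(f)G(f)|\nabla f|$ in the pointwise inequality; this is exactly Lemma \ref{L-Phi-estimate}. Absorb it only after integration, where integrating by parts against $w^{\alpha}\eta^2$ produces a gradient term with coefficient of order $\alpha$. With the paper's test function one gets
\[
2\delta\int_\Omega\frac{G(w)}{\Phi'(w)}w^{\alpha-\frac12}\eta^2|\nabla w|\le\frac{4\delta^2}{a_1\alpha}\int_\Omega\Bigl(\frac{G(w)}{\Phi'(w)}\Bigr)^2w^{\alpha}\eta^2+\frac{a_1\alpha}{4}\int_\Omega w^{\alpha-1}\eta^2|\nabla w|^2,
\]
and $\frac{4\delta^2}{a_1\alpha}\le\frac{1}{n-1}$ once $\alpha$ is large. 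This costs nothing: the iteration only uses exponents $\alpha\ge\alpha_0=c_1(1+\sqrt{\kappa}R)$, and $c_1$ may depend on $\delta$. The same large-$\alpha$ mechanism also disposes of the sign-indefinite $h|\nabla f|^2$ terms that your $\mathcal{L}(f)$ version produces.

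Second, the ``main obstacle'' you identify for the weight is a red herring, and the proposed cure would not work. The bounds $b_1\le\theta\le b_2$ only give $f^{b_1}\le\Phi'(f)/\Phi'(1)\le f^{b_2}$ for $f\ge1$, with the inequalities reversed for $f\le1$. That is not a homogeneous comparison, and it would destroy the exponent bookkeeping of the iteration. Your ``$\Phi'(f)^{-1}$-adjusted'' test function is the right instinct, but carry it through. Divide by the weight: use $f^{\alpha}\eta^2/\Phi'(f)$ against $\mathcal{L}(f)$, or, as in the paper, $w^{\alpha}\eta^2/\Phi'(w)^2$ against $\mathcal{L}(\Phi(w))$, for which $\nabla\bigl(w^{\alpha}/\Phi'(w)^2\bigr)=(\alpha-2\theta)w^{\alpha-1}\Phi'(w)^{-2}\nabla w$. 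This cancels $\Phi'$ from every term. The coercive term is then $\int(G/\Phi')^2w^{\alpha}\eta^2\ge b_0^2\int w^{\alpha+2}\eta^2$, not $\int f^{\alpha+2}\Phi'\eta^2$ as in your Step 2. The unweighted Saloff-Coste inequality then applies directly to $w^{(\alpha+1)/2}\eta$. The $\theta$-bounds enter only through the ellipticity of $A$ and the factor $\alpha-2\theta$, never through a comparison of $\Phi'$ with powers of $w$.
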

	
	\begin{remark}
		Theorem \ref{theorem:elliptic equation key}  extends several previous results.
		\begin{itemize}
			\item For \(\Phi(x)=\frac{2}{p}x^{\frac{p}{2}}\) and \(G(x)=x^{\frac{p}{2}}\), the estimate \eqref{Phiest} in Theorem \ref{theorem:elliptic equation key} reduce to the estimate in \cite{Wang2011} without additional assumptions on $\Phi$ and $G$.
			\item For \(\Phi(x)=\frac{2}{p}x^{\frac{p}{2}}\) and \(G(x)=x^{\frac{q}{2}}\), It suffices to estimate \(\mathcal{L}(w)\) and take \(\psi=w_{\varepsilon}^2\eta^2\) as the test function to obtain the result in \cite{Han2026} without additional assumptions on $\Phi$ and $G$. 
		\end{itemize}
	\end{remark}
	
	As a direct application, the following Liouville-type theorem is derived.
	\begin{theorem}\label{theorem:Liouville-type theorem}
		Under the same assumptions as in Theorem \ref{theorem:elliptic equation key} and \(\kappa=0\), 
		then exists  constant \(C=C(n,\Phi,G)\) such that 
		\begin{equation}\label{eq:1.3}
			\sup_{B_{R/2}(o)}|\nabla u|\leq C(n,\Phi,G)\left(\frac{1}{R}\right).
		\end{equation} 
		Moreover, if \(M\) is a complete manifold and \(u\) is a global solution of equation \eqref{eq:elliptic 1.1} on \(M\), then  letting \(R\rightarrow\infty\) in \eqref{eq:1.3} yields \(\nabla u\equiv0\) , then \(u\) is constant.	
	\end{theorem}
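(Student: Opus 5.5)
This is essentially a corollary of Theorem \ref{theorem:elliptic equation key}, so the plan is to specialize that estimate to $\kappa=0$ and then let the radius tend to infinity. No new analytic input is needed. All the hypotheses of Theorem \ref{theorem:elliptic equation key} are in force:
\begin{itemize}
\item Assumption \ref{a1} on $\Phi,\Phi'$;
\item $G>0$ on $\mathbb{R}^+$;
\item the bound $\delta$ on $\left|\frac{1+2xh(x)}{n-1}-x\frac{G'(x)}{G(x)}\right|$;
\item $G(x)\ge b_0\Phi'(x)x$;
\item $-\frac12<b_1\le\theta\le b_2$.
\end{itemize}
With these, \eqref{Phiest} holds on any geodesic ball $B_R(o)$ on which $u$ solves \eqref{eq:elliptic 1.1}.

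\textbf{Step 1: the local estimate \eqref{eq:1.3}.} Under ${\rm Ric}_g\ge 0$, i.e.\ $\kappa=0$, the factor $\frac{1+\sqrt{\kappa}R}{R}$ in \eqref{Phiest} reduces to $\frac1R$. This gives \eqref{eq:1.3} with $C(n,\Phi,G)=C_{n,\Phi,G}$.

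The one point to check is that $C_{n,\Phi,G}$ depends only on $n$ and on the structural constants of $\Phi$ and $G$ ($\delta,b_0,b_1,b_2$ and the constants in Assumption \ref{a1}). In particular it must not depend on $R$, on the center $o$, or on $u$. This is how the constant arises in the Nash--Moser iteration: the Saloff-Coste Sobolev constant on $B_R(o)$ is $e^{c_n(1+\sqrt{\kappa}R)}V^{-2/n}R^2$. Its $R$-dependence has been scaled out and absorbed into the factor $\frac{1+\sqrt{\kappa}R}{R}$, and the volume factors cancel through the normalized $L^p$ norms. When $\kappa=0$ the exponential factor is a pure dimensional constant. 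I would state this uniformity explicitly, since it is the only thing the Liouville argument needs.

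\textbf{Step 2: passage to the limit.} Let $u$ be a global solution on the complete manifold $M$, and fix any point $x\in M$. For every $R>2d(o,x)$ we have $x\in B_{R/2}(o)$, and $u$ solves \eqref{eq:elliptic 1.1} on $B_R(o)$. Hence Step 1 gives
\[
|\nabla u|(x)\le \sup_{B_{R/2}(o)}|\nabla u|\le \frac{C(n,\Phi,G)}{R}.
\]
Completeness guarantees that such balls exist for all $R$. Letting $R\to\infty$ gives $\nabla u(x)=0$. Since $x$ was arbitrary, $\nabla u\equiv0$ on $M$. Because $M$ is connected, $u$ is constant.

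I expect no real obstacle here. The only delicate point is the uniformity of the constant noted in Step 1, which is inherited from the proof of Theorem \ref{theorem:elliptic equation key}.
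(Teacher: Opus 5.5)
Your proposal is correct and follows the paper's own argument: \eqref{eq:1.3} is \eqref{Phiest} with $\kappa=0$, and the Liouville conclusion comes from letting $R\to\infty$ for a global solution on the complete manifold. You are right to flag that $C_{n,\Phi,G}$ must not depend on $R$, $o$ or $u$; the paper asserts this for the final constant, even though it loosely says the intermediate constants $a_i$ may depend on $R$.
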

	
	\begin{theorem}\label{theorem:harnack inequality}
		Assume \((M^n,g)\) satisfies the same assumption as in Theorem \ref{theorem:elliptic equation key}. Let \(u\) be a global solution to the equation \eqref{eq:elliptic 1.1} on \(M\). Then, for fixed \(o\in M\) and any \(x\in M\) we have 
		\begin{equation}\label{eq:1.4}
			u(o)-c(n,\Phi,G)\kappa^{\frac{1}{2}}d(x,o)\leq u(x)\leq u(o)+c(n,\Phi,G)\kappa^{\frac{1}{2}}d(x,o).	
		\end{equation}
	\end{theorem}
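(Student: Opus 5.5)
The plan is to integrate the local gradient bound of Theorem \ref{theorem:elliptic equation key} along a minimizing geodesic. Two preliminary remarks fix what is being proved. First, the estimate \eqref{Phiest} is stated on balls $B_{R/2}(o)$ with $o$ arbitrary. Second, since $u$ is a global solution on $M$, it solves \eqref{eq:elliptic 1.1} on every geodesic ball $B_R(y)$, for every center $y\in M$ and every $R>0$.

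The first step is a global gradient bound. Fix $y\in M$. Apply Theorem \ref{theorem:elliptic equation key} on $B_R(y)$ to get
\[
|\nabla u|(y)\le \sup_{B_{R/2}(y)}|\nabla u|\le C_{n,\Phi,G}\Bigl(\frac{1}{R}+\sqrt{\kappa}\Bigr).
\]
The constant $C_{n,\Phi,G}$ does not depend on $R$ or on the center, because the curvature bound is global. Letting $R\to\infty$ gives
\[
\sup_M|\nabla u|\le c(n,\Phi,G)\,\kappa^{1/2},
\]
with $c=C_{n,\Phi,G}$.

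The second step converts this into the two-sided inequality \eqref{eq:1.4}. Given $x\in M$, completeness and Hopf--Rinow provide a minimizing unit-speed geodesic $\gamma:[0,d(x,o)]\to M$ from $o$ to $x$. Then
\[
|u(x)-u(o)|=\Bigl|\int_0^{d(x,o)}\langle\nabla u(\gamma(s)),\gamma'(s)\rangle\,ds\Bigr|\le \sup_M|\nabla u|\;d(x,o)\le c(n,\Phi,G)\,\kappa^{1/2}d(x,o),
\]
and unpacking the absolute value gives exactly both sides of \eqref{eq:1.4}. If $u$ is only $C^1$, or merely locally Lipschitz, it is Lipschitz along $\gamma$, so the same bound holds almost everywhere on $\gamma$ and the integral argument still applies.

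The main point to check is the uniformity of $C_{n,\Phi,G}$ with respect to both the radius and the center. This should follow because the constant produced by the Nash--Moser iteration in Theorem \ref{theorem:elliptic equation key} depends only on $n$, on the structural constants $\delta,b_0,b_1,b_2$ of $\Phi$ and $G$, and on the normalization $1+\sqrt{\kappa}R$ coming from the Sobolev and volume comparison constants, which are uniform under the global Ricci lower bound. Given that, the rest is immediate. When $\kappa=0$ the argument also recovers the constancy statement of Theorem \ref{theorem:Liouville-type theorem}, since the gradient bound then forces $\nabla u\equiv0$.
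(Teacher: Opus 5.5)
Your proposal is correct and follows the paper's proof essentially verbatim. You let $R\to\infty$ in the local estimate of Theorem \ref{theorem:elliptic equation key} to get the global bound $|\nabla u|\le c(n,\Phi,G)\kappa^{1/2}$, then integrate along a minimizing geodesic from $o$ to $x$. Applying the estimate at arbitrary centers is harmless but not needed, since the balls $B_{R/2}(o)$ with fixed center already exhaust $M$ as $R\to\infty$.
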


	In the second part of this paper, taking \(G(|\nabla u|^2)=\frac{p}{2}\Phi(|\nabla u|^2)\) in equation \eqref{eq:elliptic 1.1}, we obtain the following nonlinear parabolic equation
	\begin{equation}\label{eq:parabolic 1.2}
		\frac{\partial u}{\partial t}-\Lambda_{\Phi}(u)=0,
	\end{equation}
	where
	$$
	\Lambda_{\Phi}(u):={\rm div}\left(\Phi'(|\nabla u|^2)\nabla u\right)-\frac{p}{2}\Phi(|\nabla u|^2).
	$$	
	When \(\Phi(x)=\frac{2}{p}x^{\frac{p}{2}}\),  equation \eqref{eq:parabolic 1.2} reduces to
	\begin{equation}\label{eq:parabolic variant}
		\frac{\partial u}{\partial t}=\Delta_pu-|\nabla u|^p,
	\end{equation}
	which is equivalent to the $p$-heat equation \eqref{pheat}
	by setting  \(v=-(p-1)\log u\).

	Inspired by the above works, we apply the maximum
	principle to derive Li-Yau type gradient estimates for equation \eqref{eq:parabolic 1.2} on compact Riemannian manifolds respectively.

	\begin{theorem}\label{corollary:parabolic 1.1} 
		Let \((M^n,g)\) be a compact Riemannian manifold with \({\rm Ric}_g\geq0\). For any solution \(u\in M\times\mathbb{R}^{+}\) to the equation \eqref{eq:parabolic 1.2} and any $p>1$. Suppose that \(\Phi,\Phi'\) satisfies Assumption \ref{a1}, define
		$$
		h(x):=\frac{\Phi''(x)}{\Phi'(x)},\quad and \quad \theta(x)=xh(x).
		$$
		If 
		\[
		-\frac{1}{2}<b_1\leq \theta(x)\leq b_2<+\infty,
		\]
		we have	
		\begin{equation}\label{pe}
			\frac{p}{2}\Phi(|\nabla u|^2)+ u_t\leq\frac{n}{pt}.
		\end{equation}
	\end{theorem}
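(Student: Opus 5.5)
The plan is to run the Li--Yau maximum principle argument on the quantity that appears on the left of \eqref{pe}. Set
\[
w:=u_t+\frac{p}{2}\Phi(|\nabla u|^2)=\Delta_{\Phi}(u),\qquad F:=t\,w .
\]
The second equality uses \eqref{eq:parabolic 1.2}. The estimate \eqref{pe} is then equivalent to $F\le n/p$ on $M\times(0,\infty)$. Let $L$ be the linearization of $\Delta_\Phi$ at $u$:
\[
L(\psi)={\rm div}\bigl(\Phi'(|\nabla u|^2)\nabla\psi+2\Phi''(|\nabla u|^2)\langle\nabla u,\nabla\psi\rangle\nabla u\bigr)
={\rm div}\bigl(A\nabla\psi\bigr),
\qquad A:=\Phi'\bigl(I+2h\,\nabla u\otimes\nabla u\bigr).
\]
The eigenvalues of $A$ are $\Phi'$ and $\Phi'(1+2\theta)$. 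So the hypothesis $-\tfrac12<b_1\le\theta\le b_2$, together with Assumption \ref{a1} (positivity and regularity of $\Phi'$), makes $L$ uniformly elliptic with respect to $\Phi'$. Wherever $\nabla u$ might degenerate, one can regularize as in the elliptic part, for example by working where $\nabla u\neq0$ or perturbing $|\nabla u|^2$ by $\varepsilon$.

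The first key step is an evolution inequality for $w$. Differentiating \eqref{eq:parabolic 1.2} in $t$ gives
\[
\partial_t u_t=L(u_t)-p\,\Phi'\langle\nabla u,\nabla u_t\rangle .
\]
Next I compute $L\bigl(\tfrac p2\Phi(|\nabla u|^2)\bigr)$ using the nonlinear $\Phi$-Bochner formula already developed for Theorem \ref{theorem:elliptic equation key}. It gives an identity of the form
\[
\tfrac12 L(\Phi\text{-type quantity})=\operatorname{tr}\bigl(A\nabla^2u\,A\nabla^2u\bigr)+\Phi'^2{\rm Ric}(\nabla u,\nabla u)+\Phi'\langle\nabla u,\nabla\Delta_\Phi u\rangle+\cdots .
\]
Combining the two computations and using $\nabla w=\nabla\Delta_\Phi u$, I expect to arrive at
\[
(L-\partial_t)w-p\,\Phi'\langle\nabla u,\nabla w\rangle\cdot(\text{coefficient})\;\ge\;p\Bigl(\operatorname{tr}\bigl(A\nabla^2u\,A\nabla^2u\bigr)+\Phi'^2{\rm Ric}(\nabla u,\nabla u)\Bigr),
\]
with the first-order terms in $\nabla w$ collected into a drift $\langle X,\nabla w\rangle$. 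Then ${\rm Ric}\ge0$ removes the curvature term. Since $A$ is symmetric and positive definite, the matrix Cauchy--Schwarz inequality $\operatorname{tr}(B^2)\ge\frac1n(\operatorname{tr}B)^2$ applied to $B=A^{1/2}\nabla^2u\,A^{1/2}$ gives
\[
\operatorname{tr}\bigl(A\nabla^2u\,A\nabla^2u\bigr)\ge\frac1n\bigl(\operatorname{tr}(A\nabla^2u)\bigr)^2=\frac1n(\Delta_\Phi u)^2=\frac{w^2}{n}.
\]
Hence
\[
(L-\partial_t)w+\langle X,\nabla w\rangle\ge\frac pn\,w^2 .
\]

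The second step is the maximum principle. Fix $T>0$ and let $(x_0,t_0)$ be a maximum point of $F$ on $M\times[0,T]$; it exists by compactness of $M$. If $F(x_0,t_0)\le0$ there is nothing to prove, so assume $F(x_0,t_0)>0$, which forces $t_0>0$. At $(x_0,t_0)$ we have $\nabla F=0$, $L F\le0$ (by ellipticity of $L$) and $\partial_t F\ge0$. Therefore
\[
0\ge (L-\partial_t)F+\langle X,\nabla F\rangle=t_0\bigl[(L-\partial_t)w+\langle X,\nabla w\rangle\bigr]-w\ge \frac{p}{n}t_0w^2-w .
\]
Multiplying by $t_0>0$ and dividing by $F>0$ gives $F\le n/p$ at $(x_0,t_0)$, hence everywhere on $M\times[0,T]$. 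Letting $T$ be arbitrary gives \eqref{pe}.

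I expect the main obstacle to be the first step: verifying that all the terms from differentiating $\frac p2\Phi(|\nabla u|^2)$, the $h$- and $\Phi''$-terms from the anisotropic part of $A$, and the lower-order term $-p\Phi'\langle\nabla u,\nabla u_t\rangle$ combine exactly. They must assemble into $\operatorname{tr}(A\nabla^2uA\nabla^2u)$ plus a Ricci term plus first-order terms in $\nabla w$ only, with no leftover terms of indefinite sign; the factor $\frac p2$ in front of $\Phi$ is what should make this cancellation happen. I would check this first in the model case $\Phi=\frac2px^{p/2}$, where it must reproduce the Kotschwar--Ni computation behind \eqref{eq:ni estimate}. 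Only then would I carry out the general computation with $h$ and $\theta$, using the bounds on $\theta$ solely to guarantee ellipticity of $L$.
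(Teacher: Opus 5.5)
Your proposal is correct and is essentially the paper's proof. The paper also takes $H=\frac p2\Phi(|\nabla u|^2)+u_t=\Delta_{\Phi}u$, derives $(\partial_t-\mathscr{L})H=-p\,\operatorname{tr}(A\nabla^2u\,A\nabla^2u)-p(\Phi')^2{\rm Ric}(\nabla u,\nabla u)$ (in your normalization of $A$) from the $\Phi$-Bochner formula with $\mathscr{L}=L-p\Phi'\langle\nabla u,\nabla\,\cdot\,\rangle$, and concludes by the $A$-weighted trace Cauchy--Schwarz inequality and the maximum principle for $tH$. The step you left as an expectation closes exactly, since $\partial_t(\Delta_{\Phi}u)=L(u_t)$ and $\frac p2L(\Phi(|\nabla u|^2))=p\,\operatorname{tr}(A\nabla^2u\,A\nabla^2u)+p(\Phi')^2{\rm Ric}(\nabla u,\nabla u)+p\Phi'\langle\nabla u,\nabla\Delta_{\Phi}u\rangle$: your undetermined coefficient is $1$, the drift is $X=-p\Phi'\nabla u$, and the inequality before Cauchy--Schwarz is in fact an identity.
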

	
	\begin{remark}
		For \(\Phi(x)=\frac{2}{p}x^{\frac{p}{2}}\), the estimate \eqref{pe} in Theorem \ref{corollary:parabolic 1.1} reduce to the estimate in \cite{Kotschwar2009} without additional assumptions on $\Phi$.
	\end{remark}

	\begin{corollary}\label{corollary:parabolic 1.2}
		Under the same notations and assumptions as in Theorem \ref{corollary:parabolic 1.1} , if \(\Phi(x)\geq\frac{2}{p}x\), then for all \(x_1,x_2\in M\) and \(0<t_1<t_2<+\infty\),
		\[
		u(x_2,t_2)-u(x_1,t_1)\leq\frac{n}{p}\ln\frac{t_2}{t_1}+\frac{d(x_1,x_2)^2}{t_2-t_1}.
		\]
	\end{corollary}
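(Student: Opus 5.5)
We integrate the Li--Yau type bound of Theorem \ref{corollary:parabolic 1.1} along a space-time path, in the spirit of the classical Li--Yau Harnack argument. Write the estimate \eqref{pe} as
\[
u_t \le \frac{n}{pt} - \frac{p}{2}\Phi(|\nabla u|^2),
\]
and use the extra hypothesis $\Phi(x)\ge \frac{2}{p}x$ to obtain the linear-in-gradient-squared bound
\[
u_t \le \frac{n}{pt} - |\nabla u|^2 .
\]
This is the only place where the assumption on $\Phi$ enters. It converts the nonlinear Li--Yau bound into exactly the form used for the heat equation.

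Next, let $\gamma:[0,1]\to M$ be a minimal geodesic with $\gamma(0)=x_1$ and $\gamma(1)=x_2$; since $M$ is compact (hence complete), such a geodesic exists and $|\dot\gamma|\equiv d(x_1,x_2)$. Consider the path $\sigma(s)=(\gamma(s),(1-s)t_1+st_2)$ and the function $\ell(s)=u(\sigma(s))$. Then
\[
\ell'(s)=\langle\nabla u,\dot\gamma\rangle+(t_2-t_1)u_t \le |\nabla u|\,d(x_1,x_2)+(t_2-t_1)\Big(\frac{n}{p\,t(s)}-|\nabla u|^2\Big),
\]
where $t(s)=(1-s)t_1+st_2$.

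Applying Young's inequality $|\nabla u|\,d - (t_2-t_1)|\nabla u|^2\le \frac{d^2}{4(t_2-t_1)}$ eliminates the gradient and gives
\[
\ell'(s)\le \frac{n(t_2-t_1)}{p\,t(s)}+\frac{d(x_1,x_2)^2}{4(t_2-t_1)}.
\]
Integrating over $s\in[0,1]$, and using $\int_0^1 \frac{t_2-t_1}{t(s)}\,ds=\ln\frac{t_2}{t_1}$, yields
\[
u(x_2,t_2)-u(x_1,t_1)\le \frac{n}{p}\ln\frac{t_2}{t_1}+\frac{d(x_1,x_2)^2}{4(t_2-t_1)},
\]
which is stronger than the stated bound. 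The claimed inequality follows since the factor $\tfrac14\le1$.

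I do not expect any genuine obstacle. The only point requiring care is regularity: $u$ should be $C^1$ along the path, so that the chain rule and the fundamental theorem of calculus apply to $\ell$. This regularity is implicit in Theorem \ref{corollary:parabolic 1.1}, which treats $u_t$ and $\nabla u$ pointwise.
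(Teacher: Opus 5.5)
Your proof is correct. The paper states this corollary but never proves it: Section 4 ends with the proof of Theorem \ref{corollary:parabolic 1.1}. The intended route is evidently the one you take, namely using $\Phi(x)\ge\frac{2}{p}x$ to turn \eqref{pe} into $u_t\le\frac{n}{pt}-|\nabla u|^2$ and then integrating along the space-time path $s\mapsto(\gamma(s),(1-s)t_1+st_2)$.

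Your optimization of $d\,|\nabla u|-(t_2-t_1)|\nabla u|^2$ actually gives the sharper term $\frac{d(x_1,x_2)^2}{4(t_2-t_1)}$. This is exactly the classical Li--Yau Harnack constant in the model case $p=2$, $\Phi(x)=x$, so the stated bound is simply non-optimal, and your estimate implies it.
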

	
	\begin{remark}
		When \(\Phi(x)=\frac{2}{p}x^{\frac{p}{2}}\),  using Young's  inequality, we have
		\[
		u(x_2,t_2)-u(x_1,t_1)\leq\frac{n}{p}\ln\frac{t_2}{t_1}+(p-1)p^{-\frac{p}{p-1}}\left(\frac{d(x_1,x_2)}{t_2-t_1}\right)^{\frac{p}{p-1}}.
		\]
	\end{remark}

	Moreover, we deuce the following Liouville theorem from Theorem \ref{corollary:parabolic 1.1}.
	\begin{theorem}\label{theorem: parabolic Liouville-type theorem}
		Under the same assumptions as in Theorem \ref{corollary:parabolic 1.1} . Assume that \(u\) is a positive smooth solution to time-independent equation \(\Delta_{\Phi}u-\frac{p}{2}\Phi(|\nabla u|^2)=0\) on \(M\). If there exist two positive constants \(c\) and \(C\) such that \(c<u<C\), then \(u\) must be constant.	
	\end{theorem}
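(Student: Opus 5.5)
The plan is to treat $u$ as a stationary solution of the parabolic equation \eqref{eq:parabolic 1.2} and apply Theorem \ref{corollary:parabolic 1.1}. Set $\tilde u(x,t):=u(x)$ on $M\times\mathbb{R}^{+}$. Then $\partial_t\tilde u=0=\Lambda_{\Phi}(\tilde u)$, so $\tilde u$ solves \eqref{eq:parabolic 1.2}. Estimate \eqref{pe} with $u_t=0$ gives
\[
\frac{p}{2}\Phi(|\nabla u|^2)\le \frac{n}{pt}\qquad\text{for every } t>0 .
\]
The left side does not depend on $t$, so letting $t\to\infty$ yields $\Phi(|\nabla u|^2)\le 0$ pointwise on $M$.

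Next I turn this into $\nabla u\equiv0$. Since $\Phi\ge0$ and $\Phi(0)=0$ by the normalization, we get $\Phi(|\nabla u|^2)=0$. I would use Assumption \ref{a1}, which I expect gives $\Phi'>0$ on $(0,\infty)$; this is implicit, since $h=\Phi''/\Phi'$ must be defined. Then $\Phi$ is strictly increasing, so $\Phi(x)=0$ forces $x=0$. Hence $|\nabla u|\equiv0$, and $u$ is constant on each component; $M$ is taken to be connected.

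A second route avoids the limit in $t$. The equation becomes $\Delta_\Phi u=\frac p2\Phi(|\nabla u|^2)\ge0$, so integrating over the compact $M$ gives $\int_M\Phi(|\nabla u|^2)=0$, and the same conclusion follows. I would mention this as a check.

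The bounds $c<u<C$ are not actually needed on compact $M$, beyond positivity and smoothness. They matter only if one wants the result to follow from a Harnack-type consequence such as Corollary \ref{corollary:parabolic 1.2}. In that version one applies the corollary with $t_2\to\infty$, $t_1$ fixed, and $x_1,x_2$ arbitrary, using boundedness to control the differences. I would present the direct argument above.

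The main obstacle is mild: confirming that Assumption \ref{a1} guarantees strict monotonicity of $\Phi$ near $0$, so that $\Phi(x)=0$ forces $x=0$. If Assumption \ref{a1} only gives $\Phi'\ge0$, I would argue as follows. The bound $b_1\le\theta\le b_2$ integrates to $\Phi'(x)\ge\Phi'(1)\min(x^{b_1},x^{b_2})$ for $x>0$, and this is positive once $\Phi'(1)>0$. That settles it.
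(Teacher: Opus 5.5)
Your proof is correct and follows the route the paper indicates. The paper gives no separate proof; it only says the result is deduced from Theorem \ref{corollary:parabolic 1.1}, which is exactly your stationary-solution argument yielding $\frac{p}{2}\Phi(|\nabla u|^2)\le\frac{n}{pt}$ for all $t>0$. Assumption \ref{a1} already gives $\Phi'(x)\ge c_1(a+\sqrt{x})^{p-2}>0$ for $x>0$, so your fallback is not needed.

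One aside is off, though it plays no role in your proof. Applying Corollary \ref{corollary:parabolic 1.2} with $t_1$ fixed and $t_2\to\infty$ gives nothing, since $\frac{n}{p}\ln\frac{t_2}{t_1}\to\infty$. One would instead let $t_1,t_2\to\infty$ with $t_2/t_1\to1$ and $t_2-t_1\to\infty$.
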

	
	During the finalization of our manuscript, a related study \cite{Shen2025} appeared, which investigates gradient estimates for the similar equation \({\rm{div}}\left(\varphi(|\nabla u|^2)\nabla u\right)+\psi(u^2)u=0\). While both works share the Nash-Moser framework, our approach differs in key technical
	details and yields a more natural and concise result.

	The paper is organized as follows. In Section \ref{S2}, we develop the variational formulation for the \(\Phi\)-Laplacian operator \eqref{PhiLa} and derive its linearized operator together with a \(\Phi\)-Bochner type formula. In Section \ref{S3},  we establish gradient estimates, Liouville-type theorems, and Harnack inequalities via the Nash-Moser iteration for solutions to elliptic \(\Phi\)-Laplace equation \eqref{eq:elliptic 1.1}. In Section \ref{S4}, we extend the analysis to the parabolic setting, applying maximum principle to obtain gradient estimates, Liouville-type theorems, and Harnack inequalities for parabolic \(\Phi\)-Laplace equations \eqref{eq:parabolic 1.2}.
	
	\section{Variational Formulation and Bochner Formula for the \(\Phi\)-Laplacian}\label{S2}
	
	Throughout this paper, let \((M^n,g)\) be an \(n\)-dimensional Riemannian manifold with \({\rm Ric}_g\geq-(n-1)\kappa g\) for some constant \(\kappa\geq0\).  The volume form  is given by \(dV=\sqrt{\det(g_{ij})}dx_1\wedge...\wedge dx_n\), where \((x_1,...,x_n)\) is a local coordinate chart.  For simplicity, we may omit the volume form when integrating over \(M\).

	For a smooth function $\psi$ on a Riemannian manifold $(M^n,g)$,
	\begin{align*}
		\frac{d}{d\varepsilon}\Big{|}_{\varepsilon=0}\Delta_{\Phi}(u+ \varepsilon\psi) 
		&= \frac{d}{d\varepsilon}\Big{|}_{\varepsilon=0}{\rm div}\Big{(}\Phi ^{\prime}(|\nabla(u+\varepsilon\psi)|^{2})\nabla(u+\varepsilon\psi)\Big{)} \\
		&= {\rm div}\Big{(}\Phi^{\prime}(|\nabla u|^{2})\nabla\psi+2\Phi^{\prime\prime}(|\nabla u|^{2})(\nabla u\cdot\nabla\psi)\nabla u\Big{)} \\
		&= {\rm div}\Big{(}\Phi^{\prime}(w)A(\nabla\psi)\Big{)},
	\end{align*}
	where $w=|\nabla u|^{2}$,  $h(w)=\frac{\Phi^{\prime\prime}(w)}{\Phi^{\prime}(w)}$ and $A$ is a 2-tensor defined by
	\begin{equation}\label{eq:2.1}
		A:=\mathrm{Id}+2\frac{\Phi^{\prime\prime}(w)}{\Phi^{\prime}(w)}\nabla u\otimes \nabla u=\mathrm{Id}+2h(w)\nabla u\otimes \nabla u,
	\end{equation}
	where \(\Phi'(w)>0\). Thus, the linearized operator of $\Delta_{\Phi}$ at $u$ is defined by
	\begin{equation}\label{eq:2.2}
		\begin{aligned}
		\mathcal{L}(\psi) &:= \mathrm{div}\Big(\Phi'(w)A(\nabla\psi)\Big)\\
				&= \Phi'(w)\Delta\psi + \Phi''(w)\langle \nabla w, \nabla \psi \rangle + 2\Phi'''(w)\langle \nabla u, \nabla \psi \rangle \langle \nabla w, \nabla u \rangle \\
				&\quad + 2\Phi''(w)\Big(\langle \nabla^2 u \nabla\psi, \nabla u \rangle
				+ \langle \nabla^2\psi \nabla u, \nabla u \rangle + \langle \nabla u, \nabla \psi \rangle \Delta u\Big).
		\end{aligned}
	\end{equation}

	The following $\Phi$-Bochner formulae for the linearized operator $\mathcal{L}$ play an important role in the proof.
	
	\begin{proposition}\label{proposition:Phi-Bochner}
		Set \(w=|\nabla u|^{2}\), we have
		\begin{equation}\label{eq:2.5}
			\mathcal{L}w=2\Phi'(w)\left(|\nabla\nabla u|^2+{\rm Ric}(\nabla u,\nabla u)\right)+2\langle\nabla\Delta_{\Phi}u,\nabla u\rangle+\Phi''(w)|\nabla w|^2,
		\end{equation}
		and
		\begin{equation}\label{eq:2.6}
			\mathcal{L}(\Phi(w))=2(\Phi'(w))^2(|\nabla\nabla u|_A^2+{\rm Ric}(\nabla u,\nabla u))+2\Phi'(w)\langle\nabla\Delta_{\Phi}u,\nabla u\rangle,
		\end{equation}
		where
		\begin{equation}\label{eq:2.7}
			|\nabla\nabla u|_A^2=A^{ik}A^{jl}u_{ij}u_{kl}=|\nabla\nabla u|^2+h(w)|\nabla w|^2+h^2(w)\langle\nabla w,\nabla u\rangle^2. 
		\end{equation}
	\end{proposition}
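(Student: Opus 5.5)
The proof is a direct computation in normal coordinates at a point, combining the classical Bochner formula with the expansion \eqref{eq:2.2} of the linearized operator. We write $\mathcal{L}(\psi)=\mathrm{div}(\Phi'(w)\nabla\psi)+2\,\mathrm{div}\big(\Phi''(w)\langle\nabla u,\nabla\psi\rangle\nabla u\big)$ and take $\psi=w$. The first piece is $\Phi'(w)\Delta w+\Phi''(w)|\nabla w|^2$. For $\Delta w$ we use Bochner:
\[
\Delta w=2|\nabla\nabla u|^2+2\langle\nabla\Delta u,\nabla u\rangle+2{\rm Ric}(\nabla u,\nabla u).
\]

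Next we expand $\Delta_\Phi u=\Phi'(w)\Delta u+\Phi''(w)\langle\nabla w,\nabla u\rangle$ and differentiate in the direction $\nabla u$:
\[
\langle\nabla\Delta_\Phi u,\nabla u\rangle=\Phi'\langle\nabla\Delta u,\nabla u\rangle+\Phi''\langle\nabla w,\nabla u\rangle\Delta u+\nabla u\big(\Phi''\langle\nabla w,\nabla u\rangle\big).
\]
The second piece of $\mathcal{L}w$ is $2\,\mathrm{div}\big(\Phi''\langle\nabla w,\nabla u\rangle\nabla u\big)=2\nabla u\big(\Phi''\langle\nabla w,\nabla u\rangle\big)+2\Phi''\langle\nabla w,\nabla u\rangle\Delta u$. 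This is exactly twice the last two terms of the previous identity. Hence
\[
\mathcal{L}w=2\Phi'(|\nabla\nabla u|^2+{\rm Ric})+2\langle\nabla\Delta_\Phi u,\nabla u\rangle+\Phi''|\nabla w|^2,
\]
which is \eqref{eq:2.5}. The point to check is that the term $2\Phi''(w)\langle\nabla u,\nabla w\rangle\Delta u$ cancels with the matching term coming from $\nabla\Delta_\Phi u$ rather than doubling; this is what the bookkeeping above confirms.

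For \eqref{eq:2.6} we use the chain rule for the divergence-form operator: $\mathcal{L}(\Phi(w))=\mathrm{div}\big(\Phi'\,\Phi' A\nabla w\big)=\Phi'\mathcal{L}w+\Phi''\Phi'\langle A\nabla w,\nabla w\rangle$. Since $\langle A\nabla w,\nabla w\rangle=|\nabla w|^2+2h\langle\nabla u,\nabla w\rangle^2$, substituting \eqref{eq:2.5} gives
\[
\mathcal{L}(\Phi(w))=2\Phi'^2\Big(|\nabla\nabla u|^2+{\rm Ric}+h|\nabla w|^2+h^2\langle\nabla w,\nabla u\rangle^2\Big)+2\Phi'\langle\nabla\Delta_\Phi u,\nabla u\rangle.
\]
Here we used $\Phi''/\Phi'=h$: the $|\nabla w|^2$ terms give $\Phi'\Phi''+\Phi'\Phi''=2\Phi'^2h$, and the remaining term is $2\Phi'\Phi'' h\langle\cdot\rangle^2=2\Phi'^2h^2\langle\cdot\rangle^2$.

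Finally we verify \eqref{eq:2.7} directly. With $A^{ij}=\delta_{ij}+2hu_iu_j$,
\[
A^{ik}A^{jl}u_{ij}u_{kl}=|\nabla\nabla u|^2+4h\,u_iu_ku_{ij}u_{kj}+4h^2(u_{ij}u_iu_j)^2.
\]
Using $\nabla_j w=2u_{ij}u_i$, we get $u_iu_ku_{ij}u_{kj}=\tfrac14|\nabla w|^2$ and $u_{ij}u_iu_j=\tfrac12\langle\nabla w,\nabla u\rangle$. So the last two terms are $h|\nabla w|^2+h^2\langle\nabla w,\nabla u\rangle^2$, matching \eqref{eq:2.7}. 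The step most likely to cause trouble is tracking the $\Phi'''$ terms in \eqref{eq:2.5}. Writing $\mathcal{L}$ in divergence form, as above, avoids expanding them at all: they hide inside $\nabla u\big(\Phi''\langle\nabla w,\nabla u\rangle\big)$ and cancel wholesale.
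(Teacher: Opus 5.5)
Your proof is correct and follows the paper's route: apply Bochner to $\Delta w$, match the result against the expansion of $2\langle\nabla\Delta_{\Phi}u,\nabla u\rangle$, then use the chain rule $\mathcal{L}(U(w))=U'(w)\mathcal{L}w+U''(w)\Phi'(w)\langle A\nabla w,\nabla w\rangle$ with $U=\Phi$. The differences are cosmetic. You keep the $\Phi'''$ terms packaged inside $\langle\nabla(\Phi''(w)\langle\nabla w,\nabla u\rangle),\nabla u\rangle$ rather than expanding them as in \eqref{eq:2.8}--\eqref{eq:2.9}, and you verify the identity \eqref{eq:2.7} explicitly, which the paper only asserts.
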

	\begin{proof}
		By a direct calculation and the Bochner formula, we have
		\begin{equation}\label{eq:2.8}
			\begin{aligned}
				\mathcal{L}w &= \mathrm{div}\Big{(}\Phi^{\prime}(w)\nabla w+2\Phi^{\prime\prime}(w)(\nabla w,\nabla u)\nabla u\Big{)} \\
				&= \Phi^{\prime}(w)\Delta w+\Phi^{\prime\prime}(w)|\nabla w|^{2}+2\Phi^{\prime\prime\prime}(w)\langle\nabla u,\nabla w\rangle^{2} \\
				&\quad+2\Phi^{\prime\prime}(w)\big{(}\langle\nabla w,\nabla u\rangle\Delta u+\langle\nabla(\nabla u,\nabla w),\nabla u\rangle\big{)}\\
				&= 2\Phi^{\prime}(w)\big{(}|\nabla\nabla u|^{2}+\mathrm{Ric}(\nabla u,\nabla u)+\langle\nabla\Delta u,\nabla u\rangle\big{)}+\Phi^{\prime\prime}(w)|\nabla w|^{2} \\
				&\quad +2\Phi^{\prime\prime}(w)\big{(}\langle\nabla w,\nabla u\rangle\Delta u+\langle\nabla(\nabla u,\nabla w),\nabla u\rangle\big{)}+2\Phi^{\prime\prime\prime}(w)\langle\nabla u,\nabla w\rangle^{2},
			\end{aligned}
		\end{equation}			
		On the other hand, by the definition of $\Delta_{\Phi}$,
		\begin{equation}\label{eq:2.9}
			\begin{aligned}
				2\langle\nabla\Delta_{\Phi}u,\nabla u\rangle &= 2\left\langle\nabla\Big{(}\Phi^{\prime}(w)\Delta u+\Phi^{\prime\prime}(w)(\nabla u,\nabla w)\Big{)},\nabla u\right\rangle \\
				&= 2\Phi^{\prime}(w)\langle\nabla\Delta u,\nabla u\rangle+2\Phi^{\prime\prime}(w)\langle\nabla w,\nabla u\rangle\Delta u \\
				&\quad +2\Phi^{\prime\prime\prime}(w)\langle\nabla w,\nabla u\rangle^{2}+2\Phi^{\prime\prime}(w)\langle\nabla(\nabla u,\nabla w),\nabla u\rangle.
			\end{aligned}
		\end{equation}
		Substituting \eqref{eq:2.9} into \eqref{eq:2.8} yields the desired $\Phi$-Bochner formula \eqref{eq:2.5}.
		
		For a function $U(w)$ of $w$, applying $\Phi$-Bochner formula \eqref{eq:2.5}, we have
		\begin{equation}\label{eq:2.10}
			\begin{aligned}
				\mathcal{L}U(w) &= \mathrm{div}\Big{(}\Phi^{\prime}(w)A(\nabla U(w))\Big{)} \\
				&=U^{\prime}(w)\mathcal{L}w+U^{\prime\prime}(w)\Phi^{\prime}(w)A(\nabla w)\cdot\nabla w \\
				&= U^{\prime}(w)\Big{(}2\Phi^{\prime}(w)(|\nabla\nabla u|^{2}+\mathrm{Ric}(\nabla u,\nabla u))+2\langle\nabla\Delta_{\Phi}u,\nabla u\rangle+\Phi^{\prime\prime}(w)|\nabla w|^{2}\Big{)}\\
				&\quad+U^{\prime\prime}(w)\Phi^{\prime}(w)|\nabla w|_{A}^{2}\\ 
				&= 2U^{\prime}(w)\Phi^{\prime}(w)\big{(}|\nabla\nabla u|^{2}+\mathrm{Ric}(\nabla u,\nabla u)\big{)}+2U^{\prime}(w)\langle\nabla\Delta_{\Phi}u,\nabla u\rangle \\
				&\quad +(U^{\prime}(w)\Phi^{\prime}(w))^{\prime}|\nabla w|^{2}+2U^{\prime\prime}(w)\Phi^{\prime\prime}(w)\langle\nabla u,\nabla w\rangle^{2},
			\end{aligned}
		\end{equation}
		where
		\[
		|\nabla w|_{A}^{2}:=A^{ij}w_{i}w_{j}=|\nabla w|^{2}+2h(w)\langle\nabla u,\nabla w\rangle^{2}.
		\]
		In particular, taking $U(w)=\Phi(w)$ in \eqref{eq:2.10}, we obtain
		\begin{align*}
			\mathcal{L}\Phi(w) &= 2\langle\Phi^{\prime}(w)\rangle^{2}\Big{(}|\nabla\nabla u|^{2}+\mathrm{Ric}(\nabla u,\nabla u)\Big{)}+2\Phi^{\prime}(w)\langle\nabla\Delta_{\Phi}u,\nabla u\rangle \\
			&\quad +2\Phi^{\prime}(w)\Phi^{\prime\prime}(w)|\nabla w|^{2}+2\langle\Phi^{\prime\prime}(w)\rangle^{2}\langle\nabla u,\nabla w\rangle^{2} \\
			&= 2\langle\Phi^{\prime}(w)\rangle^{2}\Big{(}|\nabla\nabla u|_{A}^{2}+\mathrm{Ric}(\nabla u,\nabla u)\Big{)}+2\Phi^{\prime}(w)\langle\nabla\Delta_{\Phi}u,\nabla u\rangle,
		\end{align*}
		where $|\nabla\nabla u|_{A}^{2}$ is defined in \eqref{eq:2.7}.	
	\end{proof}
	\begin{corollary}[\(p\)-Bochner formula]\label{corollary:p-Bochner-formula}
		When \(\Phi(x)=\frac{2}{p}x^{\frac{p}{2}}\), \(\Phi\)-Bochner formula in \eqref{eq:2.6} reduces to  the classic \(p\)-Bochner formula \cite{Wang2023b}
		\[
		\mathcal{L}(|\nabla u|^p)=p|\nabla u|^{2p-4}\left(|\nabla\nabla u|_A^2+{\rm Ric}(\nabla u,\nabla u)\right)+p|\nabla u|^{p-2}\langle\nabla\Delta_pu,\nabla u\rangle.
		\] 
	\end{corollary}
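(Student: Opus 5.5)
We obtain the identity by direct substitution into the $\Phi$-Bochner formula \eqref{eq:2.6} of Proposition \ref{proposition:Phi-Bochner}. No new computation is needed beyond identifying each ingredient for the model case. Throughout, we work on the open set where $\nabla u\neq 0$, so that $w=|\nabla u|^2>0$ and $\Phi'(w)>0$, as required in the definition \eqref{eq:2.1} of $A$.

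First we compute the ingredients for $\Phi(x)=\frac{2}{p}x^{\frac p2}$:
\[
\Phi'(x)=x^{\frac p2-1},\qquad \Phi''(x)=\Big(\tfrac p2-1\Big)x^{\frac p2-2},\qquad h(x)=\frac{\Phi''(x)}{\Phi'(x)}=\frac{p-2}{2x}.
\]
Consequently:
\begin{itemize}
\item $\Phi(w)=\frac 2p|\nabla u|^p$, so $\mathcal{L}(\Phi(w))=\frac2p\mathcal{L}(|\nabla u|^p)$ by linearity of $\mathcal{L}$.
\item $\Phi'(w)=|\nabla u|^{p-2}$ and $(\Phi'(w))^2=|\nabla u|^{2p-4}$.
\item $\Delta_\Phi u={\rm div}(|\nabla u|^{p-2}\nabla u)=\Delta_p u$.
\item The tensor in \eqref{eq:2.1} becomes
\[
A={\rm Id}+(p-2)\frac{\nabla u\otimes\nabla u}{|\nabla u|^2}.
\]
This is exactly the linearization tensor of the $p$-Laplacian used in \cite{Wang2023b}, so $|\nabla\nabla u|_A^2$ and $\mathcal{L}$ coincide with the classical objects.
\end{itemize}

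Next we insert these into \eqref{eq:2.6}, which gives
\[
\tfrac2p\mathcal{L}(|\nabla u|^p)=2|\nabla u|^{2p-4}\big(|\nabla\nabla u|_A^2+{\rm Ric}(\nabla u,\nabla u)\big)+2|\nabla u|^{p-2}\langle\nabla\Delta_pu,\nabla u\rangle.
\]
Multiplying through by $\frac p2$ yields the stated $p$-Bochner formula.

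The only point requiring care is this identification of $A$, $|\nabla\nabla u|_A^2$ and $\mathcal{L}$ with their classical $p$-Laplacian counterparts, including the formula \eqref{eq:2.7} with $h(w)=\frac{p-2}{2w}$. It is a bookkeeping check rather than a genuine obstacle, as is the restriction to the set $\{\nabla u\neq0\}$ where all quantities are smooth.
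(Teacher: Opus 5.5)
Your proposal is correct and matches the paper's approach: the paper states this corollary without a separate proof, as a direct specialization of \eqref{eq:2.6}, and your computation of $\Phi'(w)=|\nabla u|^{p-2}$, $h(w)=\frac{p-2}{2w}$ and $\Phi(w)=\frac{2}{p}|\nabla u|^p$ (using linearity of $\mathcal{L}$), followed by multiplying through by $\frac{p}{2}$, is exactly that specialization. Restricting to $\{\nabla u\neq 0\}$ is the right caveat, since $h$ is singular at $w=0$ when $p\neq 2$, and for $p<2$ so is $\Phi'$.
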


    \begin{assumption}\label{a1}
    	There exist \(p>1\), \(a\ge 0\), and constants \(c_1,c_2>0\) such that \(\Phi\in C^3(R^+)\) and for every \(\sigma,\xi\in R^n\backslash\{0\}\)
    	\begin{itemize}
    		\item \(c_1\left(a+|\sigma|\right)^{p-2}\le\Phi'(|\sigma|^2)\le c_2\left(a+|\sigma|\right)^{p-2};\)
    		\item \(c_1\left(a+|\sigma|\right)^{p-2}|\xi|^2\le\sum_{i,j=1}^na_{ij}(\sigma)\xi_i\xi_j\le c_2\left(a+|\sigma|\right)^{p-2}|\xi|^2,\)
    	\end{itemize}
    	where 
    	\[
    	a_{ij}(\sigma)=2\Phi''(|\sigma|^2)\sigma_i\sigma_j+\Phi'(|\sigma|^2)\delta_{ij}.
    	\]
    \end{assumption}
    
	\begin{definition}
		For a \(C^3\)-function \(\Phi\) on \([0,+\infty)\), the function of $\theta$ is defined by
		\[
		\theta(x):=xh(x).
		\]
		We say \(\Phi\) has finite lower bound and upper bound if there exist finite constants such that
		\[
		\inf_{x\geq0}\theta(x)=b_1\geq-\frac{1}{2},\quad \sup_{x\geq0}\theta(x)=b_2<+\infty.
		\]
	\end{definition}
	
	In our proof of gradient estimates for solutions to \eqref{eq:elliptic 1.1}, the following Sobolev inequality due to Saloff-Coste plays an important role.
	
	\begin{lemma}[Saloff-Coste\cite{Saloff-Coste1992}]\label{lemma:Saloff-Coste}
		Let \((M^n,g)\) be a complete manifold with \({\rm Ric}_g\geq-(n-1)\kappa g\). For \(n>2\), there exists a constant \(c_0\) depending only on \(n\), such that for any geodesic ball \(B\subset M\) of radius $R$ and volume $V$ and for any \(u\in C_0^{\infty}(B)\),  
		\begin{equation}\label{SCI}
			\|u\|_{L^{\frac{2n}{n-2}}{(B)}}^2\leq e^{c_0(1+\sqrt{\kappa}R)}V^{-\frac{2}{n}}R^2\left(\int_{B}|\nabla u|^2+R^{-2}u^2dV\right).
		\end{equation}
		For \(n=2\), the above inequality holds with \(n\) replaced by any fixed \(n'>2\).
	\end{lemma}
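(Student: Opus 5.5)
The plan is to follow the route by which Saloff-Coste derives local Sobolev inequalities from two geometric inputs: a local volume doubling property and a local scale-invariant Poincar\'e inequality. Both inputs hold on balls of radius $R$ under ${\rm Ric}_g\geq-(n-1)\kappa g$, with constants that grow at most like $e^{C(1+\sqrt{\kappa}R)}$.

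\emph{Step 1: volume doubling.} From the Bishop--Gromov comparison theorem, for concentric balls $B_s\subset B_r\subset B$ with $r\le R$ one has
\[
\frac{V(B_r)}{V(B_s)}\leq \frac{V_\kappa(r)}{V_\kappa(s)}\leq \left(\frac{r}{s}\right)^n e^{(n-1)\sqrt{\kappa}\,r},
\]
where $V_\kappa$ is the volume in the simply connected space form of curvature $-\kappa$. This gives doubling with constant $2^n e^{c_n\sqrt{\kappa}R}$ uniformly on subballs of $B$. It also gives the lower bound $V(B_s(x))\ge c\,(s/R)^n e^{-c_n\sqrt\kappa R}\,V(B_R)$ for $x\in B$, after comparing with a ball of radius $2R$ containing $B$.

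\emph{Step 2: Neumann Poincar\'e inequality.} Buser's inequality, or the segment inequality of Cheeger--Colding (which needs only Ricci lower bounds), gives on every ball $B_r$, $r\le R$,
\[
\int_{B_r}|f-f_{B_r}|^2\le e^{c_n(1+\sqrt{\kappa}r)}\,r^2\int_{B_r}|\nabla f|^2 .
\]

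\emph{Step 3: from doubling and Poincar\'e to Sobolev.} This is the main step, and I expect it to be the main obstacle, because one must track the exponential dependence on $\sqrt\kappa R$ and avoid extra polynomial losses. I would first use the representation $|f(x)|\lesssim \sum_{k}\,(2^{-k}R)\,\big(\fint_{B_{2^{-k}R}(x)}|\nabla f|^2\big)^{1/2}$ obtained by telescoping averages with the Poincar\'e inequality. Combined with the volume lower bound from Step 1, this yields a weak-type $L^{2n/(n-2)}$ estimate for $f\in C_0^\infty(B)$. The weak-type bound is then upgraded to the strong one by the standard truncation argument of Maz'ya, applied to the level sets $\{2^{j}<|f|\le 2^{j+1}\}$.

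The $R^{-2}u^2$ term in \eqref{SCI} absorbs the mean-value contribution at the top scale. Since the test function is only compactly supported in $B$, one could alternatively extend it by zero to a ball of radius $2R$ and use that it vanishes on a large subset. Collecting the constants, each of the form $e^{c(1+\sqrt\kappa R)}$, together with the normalisation $V^{-2/n}R^2$ gives \eqref{SCI}.

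\emph{Case $n=2$.} Doubling with exponent $n$ implies doubling with any exponent $n'>n$. Hence the same argument run with $n'>2$ gives the inequality with $n$ replaced by $n'$.
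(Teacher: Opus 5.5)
Your proposal is correct. The paper gives no proof of this lemma: it is imported from Saloff-Coste and used as a black box. Your outline is the standard proof of the cited result: Bishop--Gromov doubling with constant $2^ne^{c\sqrt{\kappa}R}$, the lower bound $V(B_s(x))\ge c\,(s/R)^ne^{-c\sqrt{\kappa}R}V$ for $x\in B$, $s\le R$, Buser's Poincar\'e inequality on balls, and then the doubling-plus-Poincar\'e-implies-Sobolev argument.

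The constant tracking you flagged as the main obstacle is harmless:
\begin{itemize}
\item The doubling and Poincar\'e constants, the weak $(1,1)$ constant of the maximal operator restricted to radii comparable to $R$, and the loss in the Maz'ya truncation are all bounded by $e^{c(1+\sqrt{\kappa}R)}$.
\item The truncation tolerates the zero-order term, because $f_j=\min\{(|f|-2^j)^+,2^j\}$ satisfies $\sum_j f_j^2\le\frac43 f^2$ pointwise.
\item The mean term $|f_{B_R(x)}|\le e^{c(1+\sqrt{\kappa}R)}V^{-1/2}\|f\|_2$ contributes exactly $e^{c(1+\sqrt{\kappa}R)}V^{-2/n}\|f\|_2^2$ after taking the $L^{2n/(n-2)}(B)$ norm. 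This is the $R^{-2}u^2$ part of the right-hand side.
\item The $n=2$ case, via $(s/R)^n\ge(s/R)^{n'}$ for $s\le R$, is also fine.
\end{itemize}

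Two small corrections.

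First, discard your alternative of using that the zero extension ``vanishes on a large subset'' of a ball of radius $2R$. That would need a reverse-doubling property, which fails in general. If $M$ is compact with ${\rm diam}\,M<R$, then $B=M$ and the constant function $1$ belongs to $C_0^\infty(B)$. The inequality without the $R^{-2}u^2$ term is then false. So keep the mean-value absorption of your main line.

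Second, the Cheeger--Colding segment inequality only yields the weak Poincar\'e inequality, with $B_{2r}$ on the right. This is harmless here: $f$ is extended by zero, and doubling holds up to any fixed multiple of $R$ with the same type of constant.

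If you instead use the $L^1$ form of the Poincar\'e inequality (Buser's estimate is an isoperimetric, i.e.\ $L^1$, statement), the truncation step can be skipped. The Hedberg splitting gives $|f-f_{B_R(\cdot)}|\lesssim e^{c(1+\sqrt{\kappa}R)}RV^{-1/n}\|\nabla f\|_2^{2/n}(\mathcal{M}|\nabla f|)^{1-2/n}$. The $L^2$-boundedness of the maximal operator $\mathcal{M}$ then gives the strong $L^{2n/(n-2)}$ bound directly.
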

	
	\begin{remark}
		For any open region \(\Omega\subset M\), if there exists a geodesic ball \(B\) such that \(\Omega\subset B\), then for any \(u\in W^{1,2}(\Omega)\),  
		\[
		\|u\|_{L^{\frac{2n}{n-2}}{(\Omega)}}^2\leq e^{c_0(1+\sqrt{\kappa}R)}V^{-\frac{2}{n}}R^2\left(\int_{\Omega}|\nabla u|^2+R^{-2}u^2dV\right).
		\]
		This can be seen by  choosing \(\{u_n\}\subset C_0^{\infty}(\Omega)\subset C_0^{\infty}(B)\) such that \(u_n\rightarrow u\) in \(W^{1,2}(\Omega)\).
	\end{remark}

	\section{Gradient estimates for the elliptic equation}\label{S3}
	
	In this section, we divide the proof of Theorem \ref{theorem:elliptic equation key} into three parts. First, we derive a basic integral inequality for \(w=|\nabla u|^2\), which will be used in the second and third parts. Second, we give an \(L^{\alpha_1}\)-estimate of \(w\) on a geodesic ball with radius \(3R/4\), where the \(L^{\alpha_1}\) norm of \(w\) determines the initial state of the Nash-Moser iteration. Finally, we complete the proof of our theorem by using the Nash-Moser iteration method.
	
	From now on, we always assume that \(\Omega=B_R(o)\subset M\) is a geodesic ball, and we  use \(a_i(i=1,2,3...)\) to denote some positive constants depending only \(n,\Phi,G\) and \(R\). 
	\begin{definition}
		A function \(u \in W^{1,1}_{\mathrm{loc}}(M)\) is called a \emph{weak solution} 
		of equation \eqref{eq:elliptic 1.1} if
		\(\Phi'(|\nabla u|^2)|\nabla u| \in L^{1}_{\mathrm{loc}}(M)\) and
		\[
		-\int_{M}\Phi'(|\nabla u|^2)\langle\nabla u,\nabla\psi\rangle\,dV
		-\int_{M}G(|\nabla u|^2)\psi\,dV = 0
		\qquad \forall\,\psi\in C_0^{\infty}(M).
		\]
	\end{definition}
	It is worth mentioning that, under suitable ellipticity and growth conditions
	on \(\Phi\) and \(G\) (see \cite{Cianchi2018}), any weak solution \(u\) 
	satisfies \(u\in W_{\mathrm{loc}}^{2,2}(M)\) and \(u\in C^{1,\alpha}(M)\) 
	for some \(\alpha\in(0,1)\). Moreover, away from the set \(\{\nabla u=0\}\),
	\(u\) is in fact smooth. 
	\begin{lemma}\label{lemma:G-Phi scaling}
		Let $\Phi \in C^3(R^+)$, $G \in C^2(R^+)$ with $\Phi'(x) > 0$ and $G(x) > 0$ for all $x \in [0,\infty)$. Suppose $b_0 > 0$ is a constant. If the differential inequality
		\[
		G(x)-b_0 \Phi'(x) x \ge 0 
		\]
		holds for all $x \in [0,\infty)$, then
		\[
		\left(\frac{G(x)}{\Phi'(x)}\right)^2 \ge b_0^2 x^2.
		\]
	\end{lemma}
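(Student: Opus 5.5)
The plan is to argue pointwise in \(x\); no differential-inequality machinery is needed despite the wording of the statement. Fix \(x\in[0,\infty)\). Since \(\Phi'(x)>0\), we may divide the hypothesis \(G(x)-b_0\Phi'(x)x\ge 0\) by \(\Phi'(x)\), which gives
\[
\frac{G(x)}{\Phi'(x)}\ \ge\ b_0x .
\]

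Next we use signs. Because \(b_0>0\) and \(x\ge 0\), the right-hand side \(b_0x\) is nonnegative. The left-hand side is also positive, since \(G(x)>0\) and \(\Phi'(x)>0\). So we have an inequality \(A\ge B\ge 0\) between nonnegative reals, with \(A=G(x)/\Phi'(x)\) and \(B=b_0x\).

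Since \(t\mapsto t^2\) is monotone increasing on \([0,\infty)\), squaring preserves the inequality. Hence
\[
\Bigl(\frac{G(x)}{\Phi'(x)}\Bigr)^2\ \ge\ b_0^2x^2 .
\]
As \(x\) was arbitrary, this proves the lemma.

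There is no genuine obstacle. The only point requiring care is to check that both sides are nonnegative before squaring, which is where the hypotheses \(b_0>0\), \(x\ge0\), \(G>0\) and \(\Phi'>0\) are used. The regularity assumptions \(\Phi\in C^3\) and \(G\in C^2\) play no role here; they are presumably stated because this lemma feeds into the later Nash--Moser estimates.
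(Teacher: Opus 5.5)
Your proof is correct and is the same argument the paper gives: divide the hypothesis by \(\Phi'(x)>0\) to get \(G(x)/\Phi'(x)\ge b_0x\), note that both sides are nonnegative, and square.
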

	\begin{proof}
		From the condition $G(x) \ge b_0 \Phi'(x) x$ and $\Phi'(x) > 0$, dividing both sides by $\Phi'(x)$ yields $\frac{G(x)}{\Phi'(x)} \ge b_0 x$. Since $b_0 \ge 0$ and $x \ge 0$, the right-hand side is non-negative, and the left-hand side is also non-negative because $G(x) \ge 0$ and $\Phi'(x) > 0$. Squaring both sides gives the desired inequality.
	\end{proof}

	\begin{lemma}\label{lemma:2.1}
		If \(u\) is a solution to equation \eqref{eq:elliptic 1.1} and \(w=|\nabla u|^{2}\), then 
		\begin{equation}\label{eq:2.11}
			\mathcal{L}w=2\Phi'(w)\left(|\nabla\nabla u|^2+{\rm Ric}(\nabla u,\nabla u)\right)+2G'(w)\langle\nabla w,\nabla u\rangle+\Phi''(w)|\nabla w|^2,
		\end{equation}
		and
		\begin{equation}\label{eq:2.12}
			\mathcal{L}(\Phi(w))=2(\Phi'(w))^2(|\nabla\nabla u|_A^2+{\rm Ric}(\nabla u,\nabla u))+2\Phi'(w)G'(w)\langle\nabla w,\nabla u\rangle.
		\end{equation}
	\end{lemma}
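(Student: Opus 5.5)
The plan is to substitute the equation \eqref{eq:elliptic 1.1} into the general $\Phi$-Bochner formulae of Proposition \ref{proposition:Phi-Bochner}. Since $u$ solves $\Delta_{\Phi}u=G(|\nabla u|^2)=G(w)$, the only term in \eqref{eq:2.5} and \eqref{eq:2.6} that involves the operator $\Delta_\Phi$ is $\langle\nabla\Delta_{\Phi}u,\nabla u\rangle$. This term can be rewritten purely in terms of $w$.

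First, I would differentiate the identity $\Delta_\Phi u=G(w)$ and use the chain rule to get $\nabla\Delta_{\Phi}u=\nabla(G(w))=G'(w)\nabla w$. Pairing with $\nabla u$ then gives
\[
\langle\nabla\Delta_{\Phi}u,\nabla u\rangle=G'(w)\langle\nabla w,\nabla u\rangle.
\]

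Second, I would insert this into \eqref{eq:2.5}, which immediately produces \eqref{eq:2.11}. Inserting it into \eqref{eq:2.6}, where the term carries the factor $2\Phi'(w)$, produces $2\Phi'(w)G'(w)\langle\nabla w,\nabla u\rangle$, which is exactly \eqref{eq:2.12}. The remaining terms, namely the Hessian, Ricci and $\Phi''(w)|\nabla w|^2$ contributions, are carried over unchanged.

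The only point requiring care is regularity. Differentiating $\Delta_\Phi u=G(w)$ needs $u$ to be at least $C^3$, but a weak solution is only $C^{1,\alpha}\cap W^{2,2}_{\mathrm{loc}}$ in general. Following the remark after the definition of weak solution, I would establish the identities pointwise on the open set $\{\nabla u\neq 0\}$, where $u$ is smooth. They would then be understood in the weak or distributional sense elsewhere, which is how they will be used with test functions in the Nash--Moser argument. Beyond this, the proof is a direct substitution.
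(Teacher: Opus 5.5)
Your proposal is correct and matches the paper's argument: the paper states that \eqref{eq:2.11} and \eqref{eq:2.12} follow directly from Proposition~\ref{proposition:Phi-Bochner}, which amounts to substituting $\langle\nabla\Delta_\Phi u,\nabla u\rangle=G'(w)\langle\nabla w,\nabla u\rangle$ exactly as you do. Your caveat of working pointwise on $\{\nabla u\neq0\}$ fits how the paper later uses these identities; your further claim that they hold in a distributional sense across $\{\nabla u=0\}$ is not proved, but it is also not needed, because the paper only applies them through the truncation $w_\varepsilon=(w-\varepsilon)^+$.
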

	\begin{proof} 
		Equations \eqref{eq:2.11} and \eqref{eq:2.12} follow directly from Proposition \ref{proposition:Phi-Bochner}.
	\end{proof}
	Furthermore, we need the following pointwise estimate for \(\mathcal{L}(\Phi(w))\).
	
	\begin{lemma}\label{L-Phi-estimate}
		If \(u\) is a solution to equation \eqref{eq:elliptic 1.1} on a Riemannian manifold with  \({\rm Ric}_g\geq-(n-1)\kappa g\) for some constant \(\kappa\geq0\), then on the set \(\{w\neq0\}\) , we have		
		\begin{equation}\label{eq:2.14}
			\mathcal{L}(\Phi(w))\geq\frac{2}{n-1}G^2(w)-2(n-1)\kappa(\Phi'(w))^2w-2\delta w^{-\frac{1}{2}}\left|\Phi'(w)\right|\left|G(w)\right||\nabla w|.
		\end{equation}
	\end{lemma}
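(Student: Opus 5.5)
The plan is to start from the identity \eqref{eq:2.12} of Lemma \ref{lemma:2.1} and bound each piece pointwise on $\{w\neq0\}$. The curvature term is immediate: ${\rm Ric}(\nabla u,\nabla u)\geq-(n-1)\kappa w$ gives exactly $-2(n-1)\kappa(\Phi'(w))^2w$. What remains is to show
\[
2(\Phi')^2|\nabla\nabla u|_A^2+2\Phi'G'\langle\nabla w,\nabla u\rangle\geq\frac{2}{n-1}G^2-2\delta w^{-\frac12}\Phi'|G||\nabla w| .
\]

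\textbf{Step 1: an adapted frame.} At a point with $w\neq0$, I would choose an orthonormal frame with $e_1=\nabla u/|\nabla u|$. Then $u_1=|\nabla u|=w^{1/2}$, $u_i=0$ for $i\ge2$, and $w_i=2u_1u_{1i}$. Consequently $\langle\nabla w,\nabla u\rangle=2wu_{11}$, and $|\nabla w|\geq 2w^{1/2}|u_{11}|$.

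Since $A={\rm Id}+2\theta(w)\,e_1\otimes e_1$, we get
\[
|\nabla\nabla u|_A^2=(1+2\theta)^2u_{11}^2+2(1+2\theta)\sum_{i\ge2}u_{1i}^2+\sum_{i,j\ge2}u_{ij}^2 .
\]
Because $\theta\geq b_1>-\tfrac12$, the middle term is nonnegative and I will drop it. By Cauchy--Schwarz, $\sum_{i,j\ge2}u_{ij}^2\ge\frac{1}{n-1}\big(\sum_{i\ge2}u_{ii}\big)^2$.

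\textbf{Step 2: use the equation.} Expanding $\Delta_\Phi u=\Phi'\Delta u+\Phi''\langle\nabla u,\nabla w\rangle=\Phi'(\Delta u+2\theta u_{11})$, equation \eqref{eq:elliptic 1.1} gives
\[
\Phi'\sum_{i\ge2}u_{ii}=G-\Phi'(1+2\theta)u_{11}.
\]
Set $X=\Phi'(1+2\theta)u_{11}$. Then
\[
(\Phi')^2|\nabla\nabla u|_A^2\geq X^2+\frac{(G-X)^2}{n-1}=\frac{n}{n-1}X^2-\frac{2GX}{n-1}+\frac{G^2}{n-1}.
\]
The gradient term equals $2\Phi'G'\cdot 2wu_{11}=4\,\frac{wG'}{G}\,G\Phi'u_{11}$.

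\textbf{Step 3: combine the cross terms.} Adding up, the quadratic term $\frac{2n}{n-1}X^2\ge0$ is discarded. The two cross terms combine to
\[
-4G\Phi'u_{11}\Big[\frac{1+2\theta}{n-1}-\frac{wG'}{G}\Big].
\]
By the hypothesis of Theorem \ref{theorem:elliptic equation key}, the bracket has absolute value at most $\delta$. So this combination is at least $-4\delta\Phi'|G||u_{11}|$, and using $|u_{11}|\le|\nabla w|/(2w^{1/2})$ it is at least $-2\delta w^{-1/2}\Phi'|G||\nabla w|$. Together with the remaining $\frac{2}{n-1}G^2$ and the curvature term, this yields \eqref{eq:2.14}.

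\textbf{Main obstacle.} The delicate point is Step 3. One must organize the algebra so that the cross term $-4GX/(n-1)$ from the Hessian pairs exactly with the $G'$ term, producing the combination controlled by $\delta$. If instead one tried to absorb the cross term by Young's inequality against $X^2$, one would lose part of the $\frac{2}{n-1}G^2$ coefficient.

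A secondary care point is regularity: the computation requires $u$ to be smooth near points where $w\neq0$, which holds by the regularity remark preceding Lemma \ref{lemma:G-Phi scaling}.
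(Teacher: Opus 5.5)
Your proposal is correct and follows the paper's own route. The paper also uses the adapted frame $e_1=\nabla u/|\nabla u|$, applies Cauchy--Schwarz to the block $(u_{ij})_{i,j\ge2}$, substitutes $\sum_{i\ge2}u_{ii}$ from the equation, and pairs the resulting cross term with $2\Phi'G'\langle\nabla w,\nabla u\rangle$ so that the $\delta$-hypothesis controls it. The differences are cosmetic: the paper keeps the nonnegative terms as $\frac{a_0}{4w}|\nabla w|^2$ in \eqref{eq:2.20} before dropping them, and it leaves the final combination with \eqref{eq:2.12} implicit, whereas you write it out.
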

	
	\begin{proof}
		Let \(\{e_1,e_2,...,e_n\}\) be a local orthonormal frame of \(TM\) in a domain with \(w\neq0\) such that \(e_1=\frac{\nabla u}{|\nabla u|}\). Then  \(u_1=w^{\frac{1}{2}}\) and
		\begin{equation}\label{eq:2.15}
			u_{11}=\frac{1}{2}w^{-\frac{1}{2}}w_1=\frac{1}{2}w^{-1}\langle\nabla u,\nabla w\rangle.
		\end{equation}	
		The \(\Phi\)-Laplace operator can be expressed in terms of \(w\),
		\begin{equation}\label{eq:2.16}
			\begin{aligned}
				\Delta_{\Phi}u&={\rm div}(\Phi'(w)\nabla u)=\Phi''(w)\langle\nabla u,\nabla w\rangle+\Phi'(w)\Delta u\\
				&=(2w\Phi''(w)+\Phi'(w))u_{11}+\Phi'(w)\sum_{i=2}^{n}u_{ii}.
			\end{aligned}
		\end{equation}	
		Substituting \eqref{eq:2.16} into equation \eqref{eq:elliptic 1.1} yields
		\begin{equation}\label{eq:2.17}
			\sum_{i=2}^{n}u_{ii}=\frac{G(w)}{\Phi'(w)}-(2wh(w)+1)u_{11},
		\end{equation}	
		where $h(w)=\frac{\Phi^{\prime\prime}(w)}{\Phi^{\prime}(w)}$. Using  \(u_1=w^{\frac{1}{2}}\) again, we have 
		\begin{equation}\label{eq:2.18}
			|\nabla w|^2=\sum_{i=1}^{n}|2u_1u_{1i}|^2=4w\sum_{i=1}^{n}u_{1i}^2.
		\end{equation}	
		By \eqref{eq:2.15} and \eqref{eq:2.18},  we have
		\begin{align*}
			|\nabla\nabla u|_A^2&=|\nabla\nabla u|^2+h(w)|\nabla w|^2+h^2(w)\langle\nabla w,\nabla u\rangle\notag\\
			&=\sum_{i,j=1}^{n}u_{ij}^2+4wh(w)\sum_{i=1}^{n}u_{1i}^2+4w^2h^2(w)u_{11}^2\notag\\
			&=\sum_{i,j=2}^{n}u_{ij}^2+2(1+2wh(w))\sum_{i=2}^{n}u_{1i}^2+(1+2wh(w))^2u_{11}^2.
		\end{align*}
		
		By combining \eqref{eq:2.17} with the Cauchy-Schwarz inequality, we get
		\begin{align*}\label{eq:2.19}
			|\nabla\nabla u|_A^2	\geq&\frac{1}{n-1}\left(\sum_{i=2}^{n}u_{ii}\right)^2+2(1+2wh(w))\sum_{i=2}^{n}u_{1i}^2+(1+2wh(w))^2u_{11}^2\notag\\
			=&\frac{1}{n-1}\left(\frac{G(w)}{\Phi'(w)}-(1+2wh(w))u_{11}\right)^2+2(1+2wh(w))\sum_{i=2}^{n}u_{1i}^2+(1+2wh(w))^2u_{11}^2\notag\\
			=&\frac{1}{n-1}\left(\frac{G(w)}{\Phi'(w)}\right)^2-\frac{2(1+2wh(w))}{n-1}\frac{G(w)}{\Phi'(w)}u_{11}+2(1+2wh(w))\sum_{i=2}^{n}u_{1i}^2+\frac{n}{n-1}(1+2wh(w))^2u_{11}^2\notag\\
			\geq&\frac{1}{n-1}\left(\frac{G(w)}{\Phi'(w)}\right)^2-\frac{(1+2wh(w))}{n-1}\frac{G(w)}{\Phi'(w)}w^{-1}\langle\nabla u,\nabla w\rangle+\frac{a_0}{4w}|\nabla w|^2,
		\end{align*}
		where \(a_0=\min\left\{2(1+2wh(w)),\frac{n}{n-1}(1+2wh(w))^2\right\}=\min\left\{2(1+2b_1),\frac{n(1+2b_1)^2}{n-1}\right\}>0\), thus
		\begin{equation}\label{eq:2.20}
			\begin{aligned}
				|\nabla\nabla u|_A^2\geq\frac{1}{n-1}\left(\frac{G(w)}{\Phi'(w)}\right)^2-\frac{(1+2wh(w))}{n-1}\frac{G(w)}{\Phi'(w)}w^{-1}\langle\nabla u,\nabla w\rangle+\frac{a_0}{4w}|\nabla w|^2.	
			\end{aligned}
		\end{equation}	
		Using the Ricci curvature assumption, \eqref{eq:2.14} follows from \eqref{eq:2.12} and \eqref{eq:2.20}.
	\end{proof}

	\begin{lemma}\label{lemma:integral inequality}
		Let \(u\) be a solution to equation \eqref{eq:elliptic 1.1} on \(\Omega\subset M\). Then, for any positive number \(\alpha\), there exist constants \(a_1=\min\left\{1 , (1+2b_1)\left(1-\frac{2b_2}{\alpha}\right)\right\}\), \(a_2=1+2b_2\), and \(a_3\) such that  
		$\frac{4\delta^2}{a_1\alpha}\leq\frac{2}{n-1}$ and for any non-negative cut-off function \(\eta\in C_0^{\infty}(\Omega)\), 
		\begin{align}\label{eq:3.1}
			&\frac{a_1\alpha}{(\alpha+1)^2}\int_{\Omega}\left|\nabla\left(w^{\frac{\alpha}{2}+\frac{1}{2}}\eta\right)\right|^2dV+\frac{1}{n-1}\int_{\Omega}\left(\frac{G(w)}{\Phi'(w)}\right)^2w^{\alpha}\eta^2dV\notag\\
			\leq&2(n-1)\kappa\int_{\Omega}w^{\alpha+1}\eta^2dV+\frac{a_3}{\alpha}\int_{\Omega}w^{\alpha+1}|\nabla\eta|^2dV.
		\end{align}
	\end{lemma}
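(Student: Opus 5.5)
The plan is the standard weak-form Nash--Moser energy argument. Test the pointwise inequality of Lemma \ref{L-Phi-estimate} against $\psi=w^{\alpha}\eta^2$ and integrate by parts using the divergence form $\mathcal{L}(\cdot)=\mathrm{div}(\Phi'(w)A(\nabla\cdot))$. To make everything homogeneous, I would divide by $\Phi'(w)$, i.e.\ work with $\mathcal{L}(\Phi(w))=\mathrm{div}(\Phi'(w)^2A\nabla w)$, and use the test function $\psi=\Phi'(w)^{-1}w^\alpha\eta^2$. Alternatively, use $\mathcal{L}w$ from \eqref{eq:2.11} together with the pointwise bound \eqref{eq:2.20} on $|\nabla\nabla u|_A^2$, which already carries the $\Phi'$ factor correctly.

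Concretely, I would rewrite \eqref{eq:2.12} via \eqref{eq:2.20} as a lower bound for $\mathrm{div}(\Phi'(w)^2A\nabla w)/(2\Phi'(w)^2)$. This bound is
\[
\frac{1}{n-1}\Big(\frac{G}{\Phi'}\Big)^2-(n-1)\kappa w-\frac{\delta}{\,}w^{-1/2}\frac{G}{\Phi'}|\nabla w|+\frac{a_0}{4w}|\nabla w|^2-\frac{\Phi''}{\Phi'}|\nabla w|_A^2\text{-type terms}.
\]
Here the $\delta$ term comes from combining $-\frac{1+2wh}{n-1}\frac{G}{\Phi'}w^{-1}\langle\nabla u,\nabla w\rangle$ with $G'\langle\nabla w,\nabla u\rangle/\Phi'$ and using the hypothesis $|\frac{1+2xh}{n-1}-x\frac{G'}{G}|\le\delta$. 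I then multiply by $w^\alpha\eta^2$ and integrate.

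After integration by parts, the left side yields $-\int \langle A\nabla w,\nabla(w^\alpha\eta^2)\rangle$ weighted by $\Phi'$ factors. The term $-\alpha\int w^{\alpha-1}\langle A\nabla w,\nabla w\rangle\eta^2$ has a good sign. It is bounded below using $\langle A\nabla w,\nabla w\rangle\ge(1+2b_1)|\nabla w|^2$, because $A$ has eigenvalues $1$ and $1+2\theta(w)$. The $\Phi''$-derivative terms produced when differentiating the weight $\Phi'(w)$ are controlled by $|\theta|\le\max(|b_1|,b_2)$ times $w^{\alpha-1}|\nabla w|^2$. I expect these to account for the factor $(1-2b_2/\alpha)$ in $a_1$, which is why $a_1$ needs $\alpha$ large. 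The cross term $2\int w^\alpha\eta\langle A\nabla w,\nabla\eta\rangle$ is bounded by $|A|\le a_2=1+2b_2$ and Young's inequality into $\epsilon\alpha\int w^{\alpha-1}|\nabla w|^2\eta^2+\frac{a_2^2}{\epsilon\alpha}\int w^{\alpha+1}|\nabla\eta|^2$. The $\delta$ term is handled by Young,
\[
2\delta w^{\alpha-\frac12}\frac{G}{\Phi'}|\nabla w|\eta^2\le \frac{a_1\alpha}{4}w^{\alpha-1}|\nabla w|^2\eta^2+\frac{4\delta^2}{a_1\alpha}\Big(\frac{G}{\Phi'}\Big)^2w^\alpha\eta^2.
\]
The stated condition $\frac{4\delta^2}{a_1\alpha}\le\frac{2}{n-1}$ is then used to absorb half of the good $(G/\Phi')^2$ term, leaving $\frac{1}{n-1}$. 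Finally, I convert $\int w^{\alpha-1}|\nabla w|^2\eta^2$ into $\frac{4}{(\alpha+1)^2}\int|\nabla(w^{\frac{\alpha+1}{2}}\eta)|^2$ minus a $w^{\alpha+1}|\nabla\eta|^2$ term, absorbed into the $a_3/\alpha$ term. The curvature term gives $2(n-1)\kappa\int w^{\alpha+1}\eta^2$.

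Regularity is handled by working on $\{w>0\}$ with $w_\varepsilon=(w-\varepsilon)^+$ and letting $\varepsilon\to0$, using $u\in W^{2,2}_{loc}\cap C^{1,\alpha}$. The main obstacle is the bookkeeping of the $\Phi''$ terms from the weights so that the coefficient of $w^{\alpha-1}|\nabla w|^2\eta^2$ stays at least $a_1\alpha$ with $a_1=\min\{1,(1+2b_1)(1-2b_2/\alpha)\}$. If a direct computation does not give exactly this constant, I would first try replacing the weight by $w^{\alpha}$ times a power of $\Phi'$ and use $\theta\in[b_1,b_2]$ to compare $\Phi'$ powers with $w$ powers.
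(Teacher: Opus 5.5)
Your concrete plan is exactly the paper's proof. You divide the pointwise bound from \eqref{eq:2.12} and \eqref{eq:2.20} by $2\Phi'(w)^2$, multiply by $w_\varepsilon^{\alpha}\eta^2$ and integrate, which is testing with the paper's $\psi=w_\varepsilon^{\alpha}\eta^2/\Phi'(w_\varepsilon)^2$. You then pick up the factor $\alpha-2\theta$ from differentiating the weight, apply Young to the $\nabla\eta$- and $\delta$-terms, convert to $|\nabla(w^{\frac{\alpha+1}{2}}\eta)|^2$, and let $\varepsilon\to0$.

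Three small corrections are needed.

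\begin{enumerate}
\item The test function in your first paragraph must carry $\Phi'(w)^{-2}$, not $\Phi'(w)^{-1}$. The latter leaves a non-constant weight $\Phi'(w)$ in every integral.

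\item To retain $\frac{1}{n-1}\int(G/\Phi')^2w^{\alpha}\eta^2$ after absorbing the $\delta$-term, you need $\frac{4\delta^2}{a_1\alpha}\le\frac{1}{n-1}$, not $\frac{2}{n-1}$. This is what the paper's proof actually imposes.

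\item The smallest eigenvalue of $A$ is $\min\{1,1+2\theta\}$, not $1+2\theta$. So the computation gives the coefficient $(\alpha-2\theta)\min\{1,1+2\theta\}\ge\alpha\min\{1,1+2b_1\}\bigl(1-\frac{2b_2}{\alpha}\bigr)$ for $\alpha>2b_2$. This is smaller than the stated $\min\{1,(1+2b_1)(1-\frac{2b_2}{\alpha})\}$ when $b_1>0$. Your doubt about the constant is therefore justified, but no change of weight is needed: only a positive lower bound on $a_1$ for large $\alpha$ is used later.
\end{enumerate}
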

	\begin{proof}
		By the regular theorem, away from \(\{w=0\}\), \(u\) is smooth and \eqref{eq:2.14} is valid. Let \(\varepsilon>0\) and set \(\psi=\frac{w_{\varepsilon}^{\alpha}\eta^2}{\left(\Phi'(w_{\varepsilon})\right)^2}\), where \(w_{\varepsilon}=(w-\varepsilon)^+\), \(\eta\in C_0^{\infty}(B_R(o))\) is a non-negative cutoff function, the  constant \(\alpha>2b_1\) will be determined later. Multiply  both sides of \eqref{eq:2.14} by  \(\psi\) and integrate over \(\Omega\), a direct computation yields
		\begin{equation}\label{eq:3.2}
			\begin{aligned}
				&\int_{\Omega}\left(\alpha-2w_{\varepsilon}h(w_{\varepsilon})\right) w_{\varepsilon}^{\alpha-1}\eta^2\left(|\nabla w|^2+2h(w)\langle\nabla u,\nabla w\rangle^2 \right)dV\\
				&+\int_{\Omega}2w_{\varepsilon}^{\alpha}\eta\langle\nabla w+2h(w)\langle\nabla u,\nabla w\rangle\nabla u,\nabla\eta\rangle dV+\frac{2}{n-1}\int_{\Omega}\left(\frac{G(w)}{\Phi'(w_{\varepsilon})}\right)^2w_{\varepsilon}^{\alpha}\eta^2dV\\
				\leq&2(n-1)\kappa\int_{\Omega}w_{\varepsilon}^{\alpha+1}\eta^2dV+2\delta\int_{\Omega}\frac{|\Phi'(w)||G(w)|}{\left(\Phi'(w_{\varepsilon})\right)^2}w_{\varepsilon}^{\alpha-\frac{1}{2}}\eta^2|\nabla w|dV.
			\end{aligned}
		\end{equation}
		Note that  the two terms containing inner products in the inequality \eqref{eq:3.2} can be controlled as follows
		\begin{align*}
			&\left(\alpha-2w_{\varepsilon}h(w_{\varepsilon})\right) w_{\varepsilon}^{\alpha-1}\eta^2\left(|\nabla w|^2+2h(w)\langle\nabla u,\nabla w\rangle^2 \right)\geq\min\left\{1,(1+2b_1)\left(1-\frac{2b_2}{\alpha}\right)\right\}\alpha w_{\varepsilon}^{\alpha-1}\eta^2|\nabla w|^2,
		\end{align*}
		and
		\begin{align*}
			2w_{\varepsilon}^{\alpha}\eta\langle\nabla w+2h(w)\langle\nabla u,\nabla w\rangle\nabla u,\nabla\eta\rangle&\geq-2(1+2b_2)w_{\varepsilon}^{\alpha}\eta|\nabla w||\nabla\eta|=-2a_2w_{\varepsilon}^{\alpha}\eta|\nabla w||\nabla\eta|.
		\end{align*}  
		Hence, letting \(\varepsilon\rightarrow0^+\)  in \eqref{eq:3.2} and using the above two inequalities, we obtain
		\begin{equation}\label{eq:3.3}
			\begin{aligned}
				&a_1\alpha\int_{\Omega}w^{\alpha-1}\eta^2|\nabla w|^2dV+\frac{2}{n-1}\int_{\Omega}\left(\frac{G(w)}{\Phi'(w)}\right)^2w^{\alpha}\eta^2dV\\
				\leq&2(n-1)\kappa\int_{\Omega}w^{\alpha+1}\eta^2dV+2a_2\int_{\Omega}w^{\alpha}\eta|\nabla w||\nabla\eta|dV\\
				&+2\delta\int_{\Omega}\frac{|\Phi'(w)||G(w)|}{\left(\Phi'(w)\right)^2}w_{\varepsilon}^{\alpha-\frac{1}{2}}\eta^2|\nabla w|dV.
			\end{aligned}
		\end{equation}   
		Let \(R_i\) denote the \(i\)-th term on the right-hand side of \eqref{eq:3.3}. By the Cauchy-Schwarz inequality and Young inequality,  
		\begin{align*}
			R_2&=2a_2\int_{\Omega}w^{\frac{1}{2}\alpha-\frac{1}{2}}\eta|\nabla w|\cdot w^{\frac{1}{2}\alpha+\frac{1}{2}}\left(\Phi'(w)\right)|\nabla\eta|dV\\
			&\leq2a_2\left(\int_{\Omega}w^{\alpha-1}\eta^2|\nabla w|^2dV\right)^{\frac{1}{2}}\left(\int_{\Omega}w^{\alpha+1}|\nabla\eta|^2dV\right)^{\frac{1}{2}}\\
			&\leq \frac{a_1\alpha}{4}\int_{\Omega}w^{\alpha-1}\eta^2|\nabla w|^2dV+\frac{4a_2^2}{a_1\alpha}\int_{\Omega}w^{\alpha+1}|\nabla\eta|^2dV,\\
			R_3&=2\int_{\Omega}\frac{\delta|\Phi'(w)||G(w)|}{\left(\Phi'(w)\right)^2}w^{\frac{\alpha}{2}}\eta\cdot w^{\frac{\alpha}{2}-\frac{1}{2}}\eta|\nabla w|dV\\
			&\leq2\left(\int_{\Omega}\left(\frac{\delta|\Phi'(w)||G(w)|}{\left(\Phi'(w)\right)^2}\right)^2w^{\alpha}\eta^2dV\right)^{\frac{1}{2}}\left(\int_{\Omega}w^{\alpha-1}\eta^2|\nabla w|^2dV\right)^{\frac{1}{2}}\\
			&\leq\frac{4\delta^2}{a_1\alpha}\int_{\Omega}\left(\frac{G(w)}{\Phi'(w)}\right)^2w^{\alpha}\eta^2dV+\frac{a_1\alpha}{4}\int_{\Omega}w^{\alpha-1}\eta^2|\nabla w|^2dV.
		\end{align*}  
		By choosing  \(\alpha\) such that  
		\(
		\frac{4\delta^2}{a_1\alpha}\leq\frac{1}{n-1},
		\)  
		we can derive from \eqref{eq:3.3} that
		\begin{equation}\label{eq:3.4}
			\begin{aligned}
				&\frac{a_1\alpha}{2}\int_{\Omega}w^{\alpha-1}\eta^2|\nabla w|^2dV+\frac{1}{n-1}\int_{\Omega}\left(\frac{G(w)}{\Phi'(w)}\right)^2w^{\alpha}\eta^2dV\\
				\leq&2(n-1)\kappa\int_{\Omega}w^{\alpha+1}\eta^2dV+\frac{4a_2^2}{a_1\alpha}\int_{\Omega}w^{\alpha+1}|\nabla\eta|^2dV.
			\end{aligned}
		\end{equation}  
		On the other hand, by using the inequality \((a+b)^2\leq 2(a^2+b^2)\), we have
		\begin{equation}\label{eq:3.5}
			\int_{\Omega}\left|\nabla\left(w^{\frac{\alpha}{2}+\frac{1}{2}}\eta\right)\right|^2dV\leq\frac{1}{2}(\alpha+1)^2\int_{\Omega}w^{\alpha-1}\eta^2|\nabla w|^2dV+2\int_{\Omega}w^{\alpha+1}|\nabla\eta|^2dV.
		\end{equation}  
		It follows immediately from \eqref{eq:3.4} and \eqref{eq:3.5} that
		\begin{equation}\label{eq:3.6}
			\begin{aligned}
				&\frac{a_1\alpha}{(\alpha+1)^2}\int_{\Omega}\left|\nabla\left(w^{\frac{\alpha}{2}+\frac{1}{2}}\eta\right)\right|^2dV+\frac{1}{n-1}\int_{\Omega}\left(\frac{G(w)}{\Phi'(w)}\right)^2w^{\alpha}\eta^2dV\\
				\leq&2(n-1)\kappa\int_{\Omega}w^{\alpha+1}\eta^2dV+\left(\frac{4a_2^2}{a_1\alpha}+\frac{2a_1\alpha}{(\alpha+1)^2}\right)\int_{\Omega}w^{\alpha+1}|\nabla\eta|^2dV.
			\end{aligned}
		\end{equation}  
		By choosing a suitable constant \(a_3\) depending only on \(\Phi,G,n\) such that 
		\[
		\frac{4a_2^2}{a_1\alpha}+\frac{2a_1\alpha}{(\alpha+1)^2}\leq\frac{a_3}{\alpha},
		\]  
		we complete the proof of Lemma \ref{lemma:integral inequality}.   
	\end{proof}
	\begin{lemma}\label{lemma:integral estimate}
		Let \((M^n,g)\) be a complete Riemannian manifold satisfying \({\rm Ric}_g\geq-(n-1)\kappa g\) for some constant \(\kappa\geq0\). Assume that \(u\) is solution to equation \eqref{eq:elliptic 1.1} and \(w=|\nabla u|^2\). If \(\Phi(w)\) and \(G(w)\) satisfy the same conditions Lemma \ref{lemma:G-Phi scaling}, then 		\begin{equation}\label{eq:3.7}
			e^{-\alpha_0}V^{\frac{2}{n}}\left(\int_{\Omega}w^{\frac{n}{n-2}(\alpha+1)}\eta^{\frac{2n}{n-2}}dV\right)^{\frac{n-2}{n}}+a_6\alpha R^2\int_{\Omega}w^{\alpha+2}\eta^2dV
			\leq a_7\left(\alpha_0^2\alpha\int_{\Omega}w^{\alpha+1}\eta^2dV+R^2\int_{\Omega}w^{\alpha+1}|\nabla\eta|^2dV\right),
		\end{equation}
		where
		\(
		\alpha_0=(1+\sqrt{\kappa}R)\max\left\{c_0+1,\frac{4(n-1)a_2^2}{a_1}\right\}.
		\)	
	\end{lemma}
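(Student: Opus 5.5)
The estimate follows by combining Lemma \ref{lemma:integral inequality} with Lemma \ref{lemma:G-Phi scaling} and the Saloff-Coste Sobolev inequality (Lemma \ref{lemma:Saloff-Coste}); essentially all the work is constant bookkeeping.

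\emph{Step 1: lower bound on the $G$-term.} Lemma \ref{lemma:G-Phi scaling} gives $\left(G(w)/\Phi'(w)\right)^2\geq b_0^2w^2$ pointwise. Inserting this into \eqref{eq:3.1} replaces the second term on its left by $\frac{b_0^2}{n-1}\int_\Omega w^{\alpha+2}\eta^2dV$. This is the source of the good term $a_6\alpha R^2\int_\Omega w^{\alpha+2}\eta^2$ in \eqref{eq:3.7}. After multiplying through by $R^2$, its coefficient $\frac{b_0^2}{n-1}R^2$ is only of size $R^2$, not $\alpha R^2$. To produce the factor $\alpha$, I will multiply the whole inequality \eqref{eq:3.1} by $\alpha$, or equivalently absorb $\alpha$ into $a_7$ on the right as described in Step 3.

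\emph{Step 2: Sobolev inequality.} Apply \eqref{SCI} to $f=w^{\frac{\alpha+1}{2}}\eta\in W^{1,2}_0(\Omega)$, justified by the remark following Lemma \ref{lemma:Saloff-Coste}. This gives
\[
e^{-c_0(1+\sqrt\kappa R)}V^{\frac2n}\Big(\int_\Omega w^{\frac{n(\alpha+1)}{n-2}}\eta^{\frac{2n}{n-2}}\Big)^{\frac{n-2}{n}}\leq R^2\int_\Omega|\nabla f|^2+\int_\Omega w^{\alpha+1}\eta^2 .
\]
Since $\alpha_0\geq c_0(1+\sqrt\kappa R)$, the factor $e^{-c_0(1+\sqrt\kappa R)}$ can be replaced by $e^{-\alpha_0}$.

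\emph{Step 3: combine and collect constants.} Multiply \eqref{eq:3.1} by $R^2(\alpha+1)^2/(a_1\alpha)$, which bounds $R^2\int|\nabla f|^2$, and add the result to the Sobolev inequality. This produces the following terms on the right.
\begin{itemize}
\item A curvature term $\frac{2(n-1)\kappa R^2(\alpha+1)^2}{a_1\alpha}\int w^{\alpha+1}\eta^2$. Since $\kappa R^2\leq(1+\sqrt\kappa R)^2$ and $\alpha\geq1$, we have $(\alpha+1)^2/\alpha\leq4\alpha$. The definition of $\alpha_0$, through the constant $\frac{4(n-1)a_2^2}{a_1}$, then makes this term $\leq a_7\alpha_0^2\alpha\int w^{\alpha+1}\eta^2$.
\item The cutoff term $\frac{a_3(\alpha+1)^2}{a_1\alpha^2}R^2\int w^{\alpha+1}|\nabla\eta|^2$, which is $\leq a_7R^2\int w^{\alpha+1}|\nabla\eta|^2$ because $\alpha\geq1$.
\item The zeroth-order term $\int w^{\alpha+1}\eta^2$ from Sobolev, which is $\leq\alpha_0^2\alpha\int w^{\alpha+1}\eta^2$ since $\alpha_0\geq1$.
\end{itemize}
On the left, the $G$-term becomes $\frac{b_0^2R^2(\alpha+1)^2}{(n-1)a_1\alpha}\int w^{\alpha+2}\eta^2\geq\frac{b_0^2}{(n-1)a_1}\alpha R^2\int w^{\alpha+2}\eta^2$. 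This yields $a_6=b_0^2/((n-1)a_1)$ directly, with the factor $\alpha$ produced automatically.

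\emph{Main obstacle.} The delicate point is making the dependence of all constants on $\alpha$ uniform. The constraint $\frac{4\delta^2}{a_1\alpha}\leq\frac1{n-1}$, together with $a_1$ depending on $\alpha$ through $1-2b_2/\alpha$, forces $\alpha$ to be at least some threshold. I would restrict to $\alpha\geq\max\{1,4b_2,4(n-1)\delta^2/a_1\}$, so that $a_1\geq\min\{1,(1+2b_1)/2\}$ is bounded below independently of $\alpha$. Then $a_6$ and $a_7$ depend only on $n,\Phi,G$, which is what the subsequent Nash--Moser iteration requires.
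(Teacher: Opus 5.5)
Your proposal is correct and follows essentially the same route as the paper. You apply the Saloff-Coste inequality to $w^{\frac{\alpha+1}{2}}\eta$, combine it with \eqref{eq:3.1}, absorb the curvature, cutoff and zeroth-order terms using $\kappa R^2\le(1+\sqrt{\kappa}R)^2\le\alpha_0^2$, and convert the $G$-term via Lemma \ref{lemma:G-Phi scaling}; the paper does the same, except that it rescales the Sobolev inequality by $\frac{a_1\alpha}{(\alpha+1)^2R^2}$ rather than rescaling \eqref{eq:3.1}. Your explicit lower threshold on $\alpha$, which keeps $a_1$ uniformly positive, is slightly more careful than the paper's ``for $\alpha\ge\alpha_0$'', since that $\alpha_0$ does not obviously enforce $\frac{4\delta^2}{a_1\alpha}\le\frac{1}{n-1}$.
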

	\begin{proof}
		By the Sobolev inequality \eqref{SCI}, we have
		\begin{equation}\label{eq:3.8}
			\left(\int_{\Omega}w^{\frac{n}{n-2}(\alpha+1)}\eta^{\frac{2n}{n-2}}dV\right)^{\frac{n-2}{n}}\leq e^{c_0(1+\sqrt{\kappa}R)}V^{-\frac{2}{n}}\left(R^2\int_{\Omega}\left|\nabla\left(w^{\frac{\alpha}{2}+\frac{1}{2}}\eta\right)\right|^2dV+\int_{\Omega}w^{\alpha+1}\eta^2dV\right),
		\end{equation}	
		where the constant \(c_0\) depends only on \(n\). Combining \eqref{eq:3.1} and the Sobolev inequality  \eqref{eq:3.8} yields
		\begin{equation}\label{eq:3.9}
			\begin{aligned}
				&\frac{a_1\alpha}{(\alpha+1)^2}e^{-c_0(1+\sqrt{\kappa}R)}V^{\frac{2}{n}}R^{-2}\left(\int_{\Omega}w^{\frac{n}{n-2}(\alpha+1)}\eta^{\frac{2n}{n-2}}dV\right)^{\frac{n-2}{n}}+\frac{1}{n-1}\int_{\Omega}\left(\frac{G(w)}{\Phi'(w)}\right)^2w^{\alpha}\eta^2dV\\
				\leq&2(n-1)\kappa\int_{\Omega}w^{\alpha+1}\eta^2dV+\frac{a_3}{\alpha}\int_{\Omega}w^{\alpha+1}|\nabla\eta|^2dV+\frac{a_1\alpha}{R^2(\alpha+1)^2}\int_{\Omega}w^{\alpha+1}\eta^2dV,	
			\end{aligned}
		\end{equation}	
		where we require that \(n\neq2\). Now we choose 
		\[
		c_1=\max\left\{c_0+1,\frac{4(n-1)a_2^2}{a_1}\right\}
		\]	
		and denote \(\alpha_0=c_1(1+\sqrt{\kappa}R)\). For \(\alpha\geq\alpha_0\), there exist constants \(a_4\) and \(a_5\) depending only on \(\Phi,n\) such that
		\[
		2(n-1)\kappa+\frac{a_1\alpha}{R^2(\alpha+1)^2}\leq\frac{a_4\alpha_0^2}{R^2},
		\quad
		and
		\quad
		\frac{a_5}{\alpha}\leq\frac{a_1\alpha}{(\alpha+1)^2}.
		\]	
		It follows that	
		\begin{align*}
			&\frac{a_5}{\alpha}e^{-\alpha_0}V^{\frac{2}{n}}R^{-2}\left(\int_{\Omega}w^{\frac{n}{n-2}(\alpha+1)}\eta^{\frac{2n}{n-2}}dV\right)^{\frac{n-2}{n}}+\frac{1}{n-1}\int_{\Omega}\left(\frac{G(w)}{\Phi'(w)}\right)^2w^{\alpha}\eta^2dV\\
			\leq&\frac{a_4\alpha_0^2}{R^2}\int_{\Omega}w^{\alpha+1}\eta^2dV+\frac{a_3}{\alpha}\int_{\Omega}w^{\alpha+1}|\nabla\eta|^2dV.
		\end{align*}	
		By applying Lemma \ref{lemma:G-Phi scaling} and taking  new suitable constants, we obtain the required result.	
	\end{proof}
	\begin{lemma}\label{lemma:L-estimate}
		Let \((M^n,g)\) be a complete Riemannian manifold satisfying \({\rm Ric}_g\geq-(n-1)\kappa g\) for some constant \(\kappa\geq0\).  Assume that \(u\) is solution to equation \eqref{eq:elliptic 1.1} and \(w=|\nabla u|^2\). If \(\Phi(w)\) and \(G(w)\) satisfy the same conditions in Lemma \ref{lemma:G-Phi scaling}, then 
		\begin{equation}\label{eq:3.10}
			\begin{aligned}
				\|w\|_{L^{\alpha_1}(B_{\frac{3R}{4}}(o))}\leq a_{12}V^{\frac{1}{\alpha_1}}\left(\frac{1+\sqrt{\kappa}R}{R}\right)^{\alpha_0+2}
			\end{aligned},
		\end{equation}
		where \(\alpha_1=\frac{n}{n-2}(\alpha_0+1)\) and the constant \(a_{12}\) depends only on \(n,\Phi,G\).
	\end{lemma}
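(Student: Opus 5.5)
This is the standard Wang--Zhang $L^{\alpha_1}$ step. I would apply Lemma \ref{lemma:integral estimate} with the particular choice $\alpha=\alpha_0$ and a cutoff $\eta\in C_0^\infty(B_R(o))$ satisfying $0\le\eta\le1$, $\eta\equiv1$ on $B_{3R/4}(o)$ and $|\nabla\eta|\le C/R$. The key point is to absorb the two right-hand terms of \eqref{eq:3.7}, both involving $w^{\alpha_0+1}$, into the good term $a_6\alpha_0R^2\int w^{\alpha_0+2}\eta^2$. This gives a bound on $\int w^{\frac{n}{n-2}(\alpha_0+1)}\eta^{\frac{2n}{n-2}}$ and hence on $\|w\|_{L^{\alpha_1}(B_{3R/4})}$.

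\textbf{Step 1: the $\eta^2$ term.} I split $\Omega$ into $\Omega_1=\{w\ge 2a_7\alpha_0/(a_6R^2)\}$ and its complement $\Omega_2$.
\begin{itemize}
\item On $\Omega_1$ we have $a_7\alpha_0^2\alpha_0 w^{\alpha_0+1}\le \tfrac12 a_6\alpha_0R^2 w^{\alpha_0+2}$, so this part is absorbed into the left side.
\item On $\Omega_2$ we have $w\le C\alpha_0/R^2$, so that contribution is at most $a_7\alpha_0^3(C\alpha_0/R^2)^{\alpha_0+1}V$.
\end{itemize}

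\textbf{Step 2: the $|\nabla\eta|^2$ term.} To handle $R^2\int w^{\alpha_0+1}|\nabla\eta|^2$, I replace $\eta$ by $\eta^{\alpha_0+2}$. Then $|\nabla\eta^{\alpha_0+2}|^2\le C\alpha_0^2R^{-2}\eta^{2\alpha_0+2}$, and the integrand becomes $w^{\alpha_0+1}\eta^{2\alpha_0+2}$. H\"older with exponents $\frac{\alpha_0+2}{\alpha_0+1}$ and $\alpha_0+2$ gives
\begin{equation*}
\int w^{\alpha_0+1}\eta^{2\alpha_0+2}\le\Big(\int w^{\alpha_0+2}\eta^{2\alpha_0+4}\Big)^{\frac{\alpha_0+1}{\alpha_0+2}}V^{\frac{1}{\alpha_0+2}}.
\end{equation*}
Young's inequality then splits this into $\epsilon\,\alpha_0R^2\int w^{\alpha_0+2}\eta^{2\alpha_0+4}$ plus a constant times $(C\alpha_0/R^2)^{\alpha_0+1}\alpha_0 V$. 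The first piece is absorbed into the left side; the second has the same form as the $\Omega_2$ contribution.

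\textbf{Step 3: conclusion.} Combining Steps 1 and 2 yields
\begin{equation*}
e^{-\alpha_0}V^{2/n}\|w^{\alpha_0+1}\eta^{2(\alpha_0+2)}\|_{L^{\frac{n}{n-2}}}\le C\,\alpha_0^{3}\Big(\frac{C\alpha_0}{R^2}\Big)^{\alpha_0+1}V .
\end{equation*}
Restricting to $B_{3R/4}$, where $\eta=1$, and taking the power $1/(\alpha_0+1)$, I obtain
\begin{equation*}
\|w\|_{L^{\alpha_1}(B_{3R/4})}\le e^{\alpha_0/(\alpha_0+1)}\,C\alpha_0^{1+\frac{3}{\alpha_0+1}}R^{-2}\,V^{\frac{1}{\alpha_1}}.
\end{equation*}
Here $V^{(1-2/n)/(\alpha_0+1)}=V^{1/\alpha_1}$. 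Since $\alpha_0=c_1(1+\sqrt\kappa R)$, this is bounded by $a_{12}V^{1/\alpha_1}\big((1+\sqrt\kappa R)/R\big)^2$, up to constants absorbed because $\alpha_0\ge c_1$. This is consistent with, and indeed stronger than, the stated form with exponent $\alpha_0+2$.

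\textbf{Main obstacle.} The delicate point is the bookkeeping of powers in Step 2. One must track how the cutoff power interacts with the Sobolev term after replacing $\eta$ by $\eta^{\alpha_0+2}$, and ensure that the constants grow only like $C^{\alpha_0}$, so that the $(\alpha_0+1)$-th root keeps them bounded. If the target exponent in \eqref{eq:3.10} does not match exactly, I would settle for any bound of the form $a_{12}V^{1/\alpha_1}$ times a power of $(1+\sqrt\kappa R)/R$. That is what the subsequent Moser iteration actually needs.
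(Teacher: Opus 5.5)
Your proposal follows the paper's proof essentially step for step. You set $\alpha=\alpha_0$ in \eqref{eq:3.7} and split $\Omega$ by the size of $w$ to absorb the $\eta^2$-term into $a_6\alpha_0R^2\int w^{\alpha_0+2}\eta^2$. You then replace the cutoff by $\tilde\eta^{\,\alpha_0+2}$, so that H\"older (exponents $\frac{\alpha_0+2}{\alpha_0+1}$ and $\alpha_0+2$) plus Young absorbs the $|\nabla\eta|^2$-term, and finally take the $(\alpha_0+1)$-th root. Like the paper (see its \eqref{eq:3.16}), you end with exponent $2$, which is the form the Nash--Moser step actually uses.

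Two corrections are needed.

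\emph{The threshold in Step 1.} It must be $2a_7\alpha_0^2/(a_6R^2)$, not $2a_7\alpha_0/(a_6R^2)$. The term to absorb is $a_7\alpha_0^3w^{\alpha_0+1}$, so with your threshold the displayed inequality on $\Omega_1$ fails by a factor $\alpha_0>1$. With the correct threshold, the $\Omega_2$ contribution becomes $a_7\alpha_0^3\big(C\alpha_0^2/R^2\big)^{\alpha_0+1}V$. The final bound is then $C(\alpha_0/R)^2V^{1/\alpha_1}$ rather than $C\alpha_0R^{-2}V^{1/\alpha_1}$. Since $\alpha_0=c_1(1+\sqrt{\kappa}R)$, this is still $C'\big((1+\sqrt{\kappa}R)/R\big)^2V^{1/\alpha_1}$, so your conclusion is unchanged.

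\emph{The comparison of exponents.} Your remark that the exponent-$2$ bound is ``stronger than'' the stated form with exponent $\alpha_0+2$ is false whenever $(1+\sqrt{\kappa}R)/R<1$, e.g.\ $\kappa=0$ and $R>1. Neither your argument nor the paper's yields the literal exponent $\alpha_0+2$. That exponent should be read as a misprint for $2$, not as something your estimate dominates.
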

	\begin{proof}
		Set \(\alpha=\alpha_0\) in \eqref{eq:3.7} to obtain
		\begin{equation}\label{eq:3.11}
			e^{-\alpha_0}V^{\frac{2}{n}}\left(\int_{\Omega}w^{\frac{n}{n-2}(\alpha+1)}\eta^{\frac{2n}{n-2}}dV\right)^{\frac{n-2}{n}}+a_6\alpha_0 R^2\int_{\Omega}w^{\alpha+2}\eta^2dV\\
			\leq a_7\left(\alpha_0^3\int_{\Omega}w^{\alpha+1}\eta^2dV+R^2\int_{\Omega}w^{\alpha+1}|\nabla\eta|^2dV\right).
		\end{equation}
		If
		\(
		w\geq\frac{2a_7\alpha_0^2}{a_6R^2},
		\)
		it is easy to see that
		\[
		a_7\alpha_0^3w^{\alpha_0+1}\eta^2\leq\frac{1}{2}a_6\alpha_0R^2w^{\alpha_0+2}\eta^2.
		\]
		Let \(\Omega=\Omega_1\cup\Omega_2\), where
		\[
		\Omega_1=\left\{x\in\Omega:w\geq\frac{2a_7\alpha_0^2}{a_6R^2}\right\},
		\]
		and \(\Omega_2\) is the complement of \(\Omega_1\). Then, 
		\begin{equation}\label{eq:3.12}
			\begin{aligned}
				a_7\alpha_0^3\int_{\Omega}w^{\alpha_0+1}\eta^2dV&=a_7\alpha_0^3\int_{\Omega_1}w^{\alpha_0+1}\eta^2dV+a_7\alpha_0^3\int_{\Omega_2}w^{\alpha_0+1}\eta^2dV\\
				&\leq\frac{a_6\alpha_0R^2}{2}\int_{\Omega}w^{\alpha_0+2}\eta^2dV+\frac{a_6\alpha_0R^2}{2}\left(\frac{2a_7\alpha_0^2}{a_6R^2}\right)^{\alpha_0+2}V,	
			\end{aligned}
		\end{equation}
		where \(V\) is the volume of \(\Omega\).
		
		Choose \(\tilde{\eta}\) such that \(0\leq\tilde{\eta}\leq1\), \(\tilde{\eta}\in C_0^{\infty}(B_R(o))\), \(\tilde{\eta}\equiv1\) in \(B_{\frac{3R}{4}}(o)\) and \(|\nabla\tilde{\eta}|\leq\frac{C}{R}\). Let
		\(
		\eta=\tilde{\eta}^{\alpha_0+2}.
		\)
		A direct calculation  gives
		\begin{equation}\label{eq:3.13}
			a_7R^2|\nabla\eta|^2\leq a_7C^2(\alpha_0+2)^2\eta^{\frac{2(\alpha_0+1)}{\alpha_0+2}}\leq a_8\alpha_0^2\eta^{\frac{2(\alpha_0+1)}{\alpha_0+2}}.
		\end{equation}
		Then using H\"older inequality and Young equality, \eqref{eq:3.13} implies
		\begin{equation}\label{eq:3.14}
			\begin{aligned}
				a_7R^2\int_{\Omega}w^{\alpha_0+1}|\nabla\eta|^2dV&\leq a_8\alpha_0^2\int_{\Omega}w^{\alpha_0+1}\eta^{\frac{2(\alpha_0+1)}{\alpha_0+2}}dV\\
				&\leq a_8\alpha_0^2\left(\int_{\Omega}w^{\alpha_0+2}\eta^2dV\right)^{\frac{\alpha_0+1}{\alpha_0+2}}V^{\frac{1}{\alpha_0+2}}\\
				&\leq\frac{a_6\alpha_0R^2}{2}\int_{\Omega}w^{\alpha_0+2}\eta^2dV+\frac{a_6\alpha_0R^2}{2}\left(\frac{2a_8\alpha_0}{a_6R^2}\right)^{\alpha_0+2}V.
			\end{aligned}
		\end{equation}
		Combining  \eqref{eq:3.12} and \eqref{eq:3.14} with \eqref{eq:3.7}, we get 
		\begin{equation}\label{eq:3.15}
			\begin{aligned}
				\left(\int_{\Omega}w^{\frac{n}{n-2}(\alpha_0+1)}\eta^{\frac{2n}{n-2}}dV\right)^{\frac{n-2}{n}}
				\leq&e^{\alpha_0}V^{1-\frac{2}{n}}\left[\frac{a_6\alpha_0R^2}{2}\left(\frac{2a_7\alpha_0^2}{a_6R^2}\right)^{\alpha_0+2}+\frac{a_6\alpha_0R^2}{2}\left(\frac{2a_8\alpha_0}{a_6R^2}\right)^{\alpha_0+2}\right]\\
				\leq&a_9e^{\alpha_0}V^{1-\frac{2}{n}}a_{10}^{\alpha_0}\alpha_0^3\left(\frac{\alpha_0}{R}\right)^{2(\alpha_0+1)}.
			\end{aligned}
		\end{equation} 
		Taking the\(1/(\alpha_0+1)\)-th power on both sides of \eqref{eq:3.15} yields 
		\begin{equation}\label{eq:3.16}
			\|w\|_{L^{\alpha_1}(B_{\frac{3R}{4}}(o))}\leq a_{11}V^{\frac{1}{\alpha_1}}\left(\frac{\alpha_0}{R}\right)^{2}.
		\end{equation}
		Hence, the require inequality.	
	\end{proof}
	
	\begin{proof}[\bf Proof of Theorem \ref{theorem:elliptic equation key}]
		Discarding the second term on the left-hand side of \eqref{eq:3.7} gives
		\begin{equation}\label{eq:3.17}
			\begin{aligned}
				\left(\int_{\Omega}w^{\frac{n}{n-2}(\alpha+1)}\eta^{\frac{2n}{n-2}}dV\right)^{\frac{n-2}{n}}\leq e^{\alpha_0}V^{-\frac{2}{n}}a_7\left(\alpha_0^2\alpha\int_{\Omega}w^{\alpha+1}\eta^2dV+R^2\int_{\Omega}w^{\alpha+1}|\nabla\eta|^2dV\right).
			\end{aligned}
		\end{equation}	
		To apply the Nash-Moser iteration, set	
		\[
		\alpha_{l+1}=\frac{n}{n-2}\alpha_l,\quad\Omega_l=B(o,r_l),\quad r_l=\frac{R}{2}+\frac{R}{4^l},\quad l=1,2,...,
		\]	
		and choose \(\eta_l\in C_0^{\infty}(\Omega_l)\) such that
		\[
		\eta_l\equiv1\quad \text{in}\quad\Omega_{l+1},\quad0\leq\tilde{\eta}_l\leq1,\quad|\nabla\eta_l|\leq\frac{C(n)4^l}{R}.
		\]	
		Now choose \(\alpha\) such that \(\alpha+1=\alpha_l\) and take \(\eta=\eta_l\) in \eqref{eq:3.17}. Using the fact
		\[
		\alpha_l\alpha_0^2\eta_l^2+R^2|\nabla\eta_l|^2\leq\alpha_0^2(\alpha_0+1)\left(\frac{n}{n-2}\right)^l+C^2(n)16^l\leq a_{12}^l\alpha_0^3\alpha_1
		\]	
		we have
		\begin{align*}
			\left(\int_{\Omega_{l+1}}w^{\alpha_{l+1}}dV\right)^{\frac{1}{\alpha_{l+1}}}&\leq\left(a_7e^{\alpha_0}V^{-\frac{2}{n}}\right)^{\frac{1}{\alpha_l}}\left(\int_{\Omega_l}(\alpha_l\alpha_0^2\eta_l^2+R^2|\nabla\eta_l|^2)w^{\alpha_l}dV\right)^{\frac{1}{\alpha_l}}\\
			&\leq\left(a_7\alpha_0^2\alpha_1e^{\alpha_0}V^{-\frac{2}{n}}\right)^{\frac{1}{\alpha_l}}a_{12}^{\frac{1}{\alpha_l}}\left(\int_{\Omega_l}w^{\alpha_l}dV\right)^{\frac{1}{\alpha_l}}.
		\end{align*}	
		By the facts
		\[
		\sum_{l=1}^{\infty}\frac{1}{\alpha_l}=\frac{n}{2\alpha_1}=\frac{n-2}{2(\alpha_0+1)},\quad\sum_{l=1}^{\infty}\frac{l}{\alpha_l}=\frac{n^2}{4\alpha_1}\frac{n(n-2)}{4(\alpha_0+1)}\leq\frac{n(n-2)}{4},
		\]	
		the  quantities
		\[
		\left(a_7\alpha_0^2\alpha_1e^{\alpha_0}\right)^{\sum_{l=1}^{\infty}\frac{1}{\alpha_l}}\quad\text{and}\quad a_{12}^{\sum_{l=1}^{\infty}\frac{1}{\alpha_l}},
		\]	
		are both uniformly bounded for any \(R>0\) and \(\kappa\geq0\). By a standard iteration procedure, we obtain
		\begin{equation}\label{eq:3.18}
			\|w\|_{L^{\infty}(B_{\frac{R}{2}}(o))}\leq\left(a_7\alpha_0^2\alpha_1e^{\alpha_0}V^{-\frac{2}{n}}\right)^{\sum_{l=1}^{\infty}\frac{1}{\alpha_l}}a_{12}^{\sum_{l=1}^{\infty}\frac{1}{\alpha_l}}\|w\|_{L^{\alpha_1}(B_{\frac{3R}{4}}(o))}
			\leq a_{13}V^{-\frac{1}{\alpha_1}}\|w\|_{L^{\alpha_1}(B_{\frac{3R}{4}}(o))}.
		\end{equation}	
		Combining \eqref{eq:3.18} with \eqref{eq:3.16} leads to
		\begin{equation}\label{eq:3.19}
			\|\nabla u\|_{L^{\infty}(B_{\frac{R}{2}}(o))}\leq a_{14}\left(\frac{1+\sqrt{\kappa}R}{R}\right).
		\end{equation}	
		This completes the proof of Theorem \ref{theorem:elliptic equation key}.		
	\end{proof}
	\begin{corollary}
		Let \(\Omega\subset\mathbb{R}^n\) be a domain and \(u\) be a solution to the equation \eqref{eq:elliptic 1.1} with \(n\geq2\), defined on a geodesic ball \(B_R(o)\). Under the same assumptions as Theorem \ref{theorem:elliptic equation key},  we have
		\[
		|\nabla u(x)|\leq c(n,\Phi,G)(d(x,\partial\Omega))^{-1}
		\]
		for any \(x\in\Omega\). In addition, if \(\Omega=\mathbb{R}^n\), then \(u\) is a constant.
	\end{corollary}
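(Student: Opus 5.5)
The plan is to apply Theorem \ref{theorem:elliptic equation key} in the flat setting, where $\mathbb{R}^n$ with the Euclidean metric is a complete Riemannian manifold with ${\rm Ric}\equiv 0$, so $\kappa=0$ is allowed. Fix $x\in\Omega$ and set $R:=d(x,\partial\Omega)$. The Euclidean ball $B_R(x)$ is a geodesic ball contained in $\Omega$, and $u$ solves \eqref{eq:elliptic 1.1} there. The structural hypotheses on $\Phi$ and $G$ (Assumption \ref{a1}, the bound on $\theta$, the $\delta$-condition and $G\ge b_0\Phi'(x)x$) are pointwise conditions on the functions themselves and do not depend on the domain. The estimate \eqref{Phiest} with $\kappa=0$ (equivalently \eqref{eq:1.3} of Theorem \ref{theorem:Liouville-type theorem}) then gives
\[
|\nabla u(x)|\le \sup_{B_{R/2}(x)}|\nabla u|\le C_{n,\Phi,G}\,\frac{1}{R}=c(n,\Phi,G)\,d(x,\partial\Omega)^{-1},
\]
which is the first claim.

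The one point needing care is the dimension. Lemma \ref{lemma:Saloff-Coste} and the iteration in Lemmas \ref{lemma:integral estimate}--\ref{lemma:L-estimate} use the exponent $\frac{n}{n-2}$, so they require $n>2$. For $n=2$ there are two options.
\begin{itemize}
\item Use the remark after Lemma \ref{lemma:Saloff-Coste}: replace $n$ by a fixed $n'>2$, say $n'=3$, in the Sobolev inequality and rerun the same Nash--Moser argument. The only change is that the constants depend on $n'$.
\item Alternatively, regard $u$ as a function on $\mathbb{R}^2\times\mathbb{R}$ that is independent of the third variable. It still solves \eqref{eq:elliptic 1.1}, since $|\nabla u|$ and $\Delta_\Phi u$ are unchanged, and it is defined on $\Omega\times\mathbb{R}$, whose distance to the boundary at $(x,t)$ equals $d(x,\partial\Omega)$.
\end{itemize}
One subtlety is that the $\delta$-condition in Theorem \ref{theorem:elliptic equation key} involves $n-1$. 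I would use the first option, so the PDE-level condition keeps the true $n$ (it enters only through the Cauchy--Schwarz step in Lemma \ref{L-Phi-estimate}), and only the Sobolev exponent changes. I expect checking that this substitution propagates cleanly through the iteration to be the main, though routine, obstacle.

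For the Liouville part, take $\Omega=\mathbb{R}^n$, where $d(x,\partial\Omega)=\infty$. Equivalently, for every $R>0$ the ball $B_R(x)$ lies in the domain, so $|\nabla u(x)|\le C/R$. Letting $R\to\infty$ gives $\nabla u(x)=0$ for every $x$. Since $\mathbb{R}^n$ is connected and $u\in C^{1,\alpha}$, it follows that $u$ is constant. This is exactly the argument of Theorem \ref{theorem:Liouville-type theorem} specialized to Euclidean space.
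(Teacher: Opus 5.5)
Your proposal is correct and matches the paper's proof: set $R=d(x,\partial\Omega)$, use $B_R(x)\subset\Omega$, and apply Theorem \ref{theorem:elliptic equation key} with $\kappa=0$ to get $|\nabla u(x)|\le \sup_{B_{R/2}(x)}|\nabla u|\le C/R$, then let $R\to\infty$ when $\Omega=\mathbb{R}^n$. For $n=2$, your choice of rerunning the Nash--Moser argument with the Sobolev inequality in an auxiliary dimension $n'>2$, while keeping the true $n$ in the $\delta$-condition, is exactly what the paper does in its final section (there with $n'=4$).
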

	\begin{proof}
		Denote \(R=d(x,\partial\Omega)\). Obviously, we have \(B_R(x)\subset\Omega\) and 
		\[
		|\nabla u(x)|\leq \sup_{B_{\frac{R}{2}(o)}}|\nabla u|\leq a_{16}\left(\frac{1}{R}\right)=a_{16}(d(x,\partial\Omega))^{-1}
		\]
		Here the constant \(a_{16}\) depends only on \(n,\Phi,G\).	
	\end{proof}
	\begin{corollary}
		Assume \((M^n,g)\) satisfies the same assumptions as in Theorem \ref{theorem:elliptic equation key}. Let \(u\) be a global solution to the equation \eqref{eq:elliptic 1.1} on \(M\). Then, for fixed \(o\in M\) and any \(x\in M\) we have 
		\begin{equation}\label{eq:3.20}
			u(o)-c(n,\Phi,G)\kappa^{\frac{1}{2}}d(x,o)\leq u(x)\leq u(o)+c(n,\Phi,G)\kappa^{\frac{1}{2}}d(x,o).	
		\end{equation}
	\end{corollary}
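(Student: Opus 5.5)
The plan is to integrate the gradient bound of Theorem \ref{theorem:elliptic equation key} along a minimizing geodesic, letting the radius $R$ tend to infinity. Because $u$ is a global solution, the local estimate \eqref{Phiest} applies on every geodesic ball $B_R(y)$, for every center $y\in M$ and every radius $R>0$. This gives
\[
|\nabla u(y)|\leq \sup_{B_{R/2}(y)}|\nabla u|\leq C_{n,\Phi,G}\left(\frac{1}{R}+\sqrt{\kappa}\right).
\]

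Since the constant $C_{n,\Phi,G}$ does not depend on $R$ or on the center, we can let $R\to\infty$ with $y$ fixed. This yields the global pointwise bound
\[
\sup_{M}|\nabla u|\leq C_{n,\Phi,G}\sqrt{\kappa}.
\]
Verifying that $C_{n,\Phi,G}$ really is independent of $R$ and of the center is the only point needing attention. It is, since in the proof of Theorem \ref{theorem:elliptic equation key} the $R$-dependence was entirely absorbed into the factor $(1+\sqrt{\kappa}R)/R$.

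Next, fix $o$ and $x$ in $M$. Because $M$ is complete, the Hopf--Rinow theorem gives a minimizing unit-speed geodesic $\gamma:[0,d]\to M$ with $\gamma(0)=o$, $\gamma(d)=x$ and $d=d(x,o)$. By the regularity remarks earlier in the paper, $u\in C^{1,\alpha}$, so $u\circ\gamma$ is $C^1$. The fundamental theorem of calculus then gives
\[
|u(x)-u(o)|=\left|\int_0^d\langle\nabla u(\gamma(s)),\gamma'(s)\rangle\,ds\right|\leq d\cdot\sup_M|\nabla u|\leq C_{n,\Phi,G}\,\kappa^{\frac12}d(x,o).
\]

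Writing $c(n,\Phi,G)=C_{n,\Phi,G}$ and unpacking the absolute value gives both inequalities of \eqref{eq:3.20}. When $\kappa=0$ the bound forces $u$ to be constant, which is consistent with Theorem \ref{theorem:Liouville-type theorem}.
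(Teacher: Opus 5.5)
Your proposal is correct and follows the paper's proof essentially verbatim. You let $R\to\infty$ in the local bound to get $|\nabla u|\le c(n,\Phi,G)\,\kappa^{1/2}$ on all of $M$, then integrate along a minimizing geodesic from $o$ to $x$. The only cosmetic difference is that you center the balls at an arbitrary point $y$, whereas the paper keeps the center at $o$ and implicitly uses that the balls $B_{R/2}(o)$ exhaust $M$; this changes nothing.
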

	\begin{proof}
		Letting \(R\rightarrow\infty\) in \eqref{eq:3.19} yields
		\begin{equation}\label{eq:3.21}
			|\nabla u|\leq c(n,\Phi,G)\kappa^{\frac{1}{2}}.	
		\end{equation}
		Let \(d=d(x,o)\) be the distance between \(o\) and \(x\). For any length-minimizing geodesic segment \(\gamma(t):[0,d]\rightarrow M\) connecting \(o\) and \(x\), we have 
		\begin{equation}\label{eq:3.22}
			u(x)=u(o)+\int_{0}^{d}\frac{d}{dt}(u\circ\gamma(t))dt.
		\end{equation}
		From \eqref{eq:3.21}, 
		\begin{equation}\label{eq:3.23}
			\left|\frac{d}{dt}(u\circ\gamma(t))\right|\leq|\nabla u(\gamma(t))||\gamma'(t)|\leq c(n,\Phi,G)\kappa^{\frac{1}{2}}	.
		\end{equation}
		It follows from \eqref{eq:3.22} and \eqref{eq:3.23} that
		\[
		|u(x)-u(o)|\leq c(n,\Phi,G)\kappa^{\frac{1}{2}}d(x,o)
		\]
		which implies \eqref{eq:3.20}.
	\end{proof}

	\section{Gradient estimates for the nonlinear parabolic equation}\label{S4}
	
	In this section, we consider the general nonlinear parabolic equation
	\begin{equation}\label{eq:4.1}
		\frac{\partial u}{\partial t}=\Lambda_{\Phi}u:=\Delta_{\Phi}u-\frac{p}{2}\Phi(|\nabla u|^2).
	\end{equation}
	We say that a function \(u=u(x,t)\) is a \emph{weak solution} of \eqref{eq:4.1}
	on \(\Gamma\times I\) if
	\(u\in L_{\mathrm{loc}}^{2}(\Gamma\times I)\),
	\(\nabla u\in L_{\mathrm{loc}}^{2}(\Gamma\times I)\),
	\(\Phi'(|\nabla u|^2)|\nabla u|\in L_{\mathrm{loc}}^{1}(\Gamma\times I)\),
	and
	\[
	\int_{I}\int_{\Gamma} u\,\partial_{t}\psi\,dxdt
	= -\int_{I}\int_{\Gamma}\Phi'(|\nabla u|^2)\langle\nabla u,\nabla\psi\rangle\,dxdt
	- \int_{I}\int_{\Gamma}\frac{p}{2}\Phi(|\nabla u|^2)\psi\,dxdt
	\]
	holds for all \(\psi\in C_{0}^{\infty}(\Gamma\times I)\).
	
	It is worth mentioning that, under suitable ellipticity and growth conditions on
	\(\Phi\) (see \cite{Miao2026}), any such weak solution satisfies
	\[
	u\in L_{\mathrm{loc}}^{2}\bigl(I;W_{\mathrm{loc}}^{2,2}(\Gamma)\bigr)
	\cap C_{\mathrm{loc}}^{1,\alpha}(\Gamma\times I)
	\]
	for some \(\alpha\in(0,1)\), and is in fact smooth away from the set
	\(\{\nabla u=0\}\).  Further discussion on the regularity of solutions can be found
	in \cite{Miao2026}.

	Set \(w = |\nabla u|^{2}\), the linearized operator \(\mathscr{L}\) of $\Lambda_{\Phi}$ at point $u$ is defined as
	\begin{equation}\label{eq:4.2}	
		\mathscr{L}(\psi):={\rm div}(\Phi'(w)A(\nabla\psi)) - p\Phi'(w)\langle{\nabla u},{\nabla \psi}\rangle,
	\end{equation}	
	and the corresponding  parabolic operator is given by
	\[
	\Box_{\Phi}:=\frac{\partial}{\partial t}-\mathscr{L},
	\]
	where $A$ is the tensor defined in \eqref{eq:2.1}.
	
	\begin{lemma}
		If \(u\) is a solution of \eqref{eq:4.1} and \(w=|\nabla u|^2\), then 
		\begin{equation}\label{eq:4.3}
			\mathscr{L}(w)=2\Phi'(w)\left(|\nabla\nabla u|^2+{\rm Ric}(\nabla u,\nabla u)\right)+\Phi''(w)|\nabla w|^2+2\langle\nabla u,\nabla\Lambda_{\Phi}(u)\rangle,
		\end{equation}
		and
		\begin{equation}\label{eq:4.5}
			\mathscr{L}(\Phi(w))=2\left(\Phi'(w)\right)^2\left(|\nabla\nabla u|_A^2+{\rm Ric}(\nabla u,\nabla u)\right)+2\Phi'(w)\langle\nabla\Lambda_{\Phi}u,\nabla u\rangle,
		\end{equation}

	\end{lemma}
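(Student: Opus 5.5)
The plan is to reduce both identities to Proposition \ref{proposition:Phi-Bochner}, since $\mathscr{L}$ differs from $\mathcal{L}$ only by the first-order term $-p\Phi'(w)\langle\nabla u,\nabla\psi\rangle$, and $\Lambda_\Phi$ differs from $\Delta_\Phi$ only by the zeroth-order (in $u$) term $-\frac p2\Phi(w)$. Throughout I will work on the set where $u$ is smooth, i.e. away from $\{\nabla u=0\}$, as in the elliptic section.

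\textbf{First identity \eqref{eq:4.3}.} By \eqref{eq:4.2},
\[
\mathscr{L}(w)=\mathcal{L}(w)-p\Phi'(w)\langle\nabla u,\nabla w\rangle .
\]
I will then substitute \eqref{eq:2.5} for $\mathcal{L}(w)$, which contains the term $2\langle\nabla\Delta_\Phi u,\nabla u\rangle$. Writing $\Delta_\Phi u=\Lambda_\Phi u+\frac p2\Phi(w)$ and using the chain rule $\nabla\Phi(w)=\Phi'(w)\nabla w$ gives
\[
2\langle\nabla\Delta_\Phi u,\nabla u\rangle=2\langle\nabla\Lambda_\Phi u,\nabla u\rangle+p\Phi'(w)\langle\nabla w,\nabla u\rangle .
\]
The last term cancels exactly the drift term $-p\Phi'(w)\langle\nabla u,\nabla w\rangle$, which yields \eqref{eq:4.3}. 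This cancellation is precisely why the coefficient $p$ appears in the definition of $\mathscr{L}$.

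\textbf{Second identity \eqref{eq:4.5}.} I proceed in the same way:
\[
\mathscr{L}(\Phi(w))=\mathcal{L}(\Phi(w))-p\Phi'(w)^2\langle\nabla u,\nabla w\rangle .
\]
Substituting \eqref{eq:2.6} brings in the term $2\Phi'(w)\langle\nabla\Delta_\Phi u,\nabla u\rangle$. By the identity above this equals
\[
2\Phi'(w)\langle\nabla\Lambda_\Phi u,\nabla u\rangle+p\Phi'(w)^2\langle\nabla w,\nabla u\rangle ,
\]
and the extra piece again cancels the drift term. This gives \eqref{eq:4.5}.

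Alternatively, one can derive \eqref{eq:4.5} from \eqref{eq:4.3} via the chain-rule computation \eqref{eq:2.10}, since the drift term is first order and therefore transforms as $U'(w)$ times itself. There is no real obstacle here. The only point needing care is sign and bookkeeping: the factor $p$ must match the coefficient $\frac p2$ in $\Lambda_\Phi$ after differentiating $\Phi(w)$, so that the cancellation is exact.
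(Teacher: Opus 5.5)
Your proposal is correct and follows the paper's argument. For \eqref{eq:4.3} you insert \eqref{eq:2.5} and cancel the drift term against the piece $p\Phi'(w)\langle\nabla w,\nabla u\rangle$ coming from $\nabla\bigl(\frac{p}{2}\Phi(w)\bigr)$, exactly as the paper does; for \eqref{eq:4.5} you substitute \eqref{eq:2.6} directly instead of passing through \eqref{eq:4.3} and the chain rule as the paper indicates, which is only a cosmetic difference (and you mention the paper's route as your alternative).
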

	\begin{proof}
		Using the \(\Phi\)-Bochner formula \eqref{eq:2.5} and definition of \(\Lambda_{\Phi}\) in \eqref{eq:4.1}, we have
		\begin{align*}
			\mathscr{L}(w)&={\rm div}\left(\Phi'(w)A(\nabla w)\right)-p\Phi'(w)\langle\nabla u,\nabla w\rangle\\
			&=2\Phi'(w)\left(|\nabla\nabla u|^2+{\rm Ric}(\nabla u,\nabla u)\right)+2\langle\nabla\Delta_{\Phi}u,\nabla u\rangle+\Phi''(w)|\nabla w|^2-p\Phi'(w)\langle\nabla u,\nabla w\rangle\\
			&=2\Phi'(w)\left(|\nabla\nabla u|^2+{\rm Ric}(\nabla u,\nabla u)\right)+\Phi''(w)|\nabla w|^2+2\langle\nabla u,\nabla\Lambda_{\Phi}(u)\rangle.
		\end{align*}
		Formula \eqref{eq:4.5} follows from a direct calculation using \eqref{eq:4.3}.
	\end{proof}
	\begin{lemma}
		If \(u\) is a solution of \eqref{eq:4.1} and \(w=|\nabla u|^2\), then 
		\begin{equation}\label{eq:4.6}
			\Box_{\Phi}(u_t)=0,
		\end{equation}
	and
	\begin{equation}\label{eq:4.7}
		\Box_{\Phi}\Phi(w)=-2\left(\Phi'(w)\right)^2\left(|\nabla\nabla u|_A^2+{\rm Ric}(\nabla u,\nabla u)\right).
	\end{equation}
\end{lemma}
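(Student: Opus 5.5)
The plan is to differentiate the equation in time and to use the identity \eqref{eq:4.5} for the elliptic part. Both claims then reduce to checking that $\mathscr{L}$ is exactly the linearization of $\Lambda_{\Phi}$, plus a chain-rule computation for $\partial_t\Phi(w)$.

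For \eqref{eq:4.6}, differentiate $u_t=\Lambda_{\Phi}u$ in $t$. Since $\Lambda_{\Phi}$ does not depend on $t$ explicitly, $\partial_t(\Lambda_{\Phi}u)$ is the first variation of $\Lambda_{\Phi}$ at $u$ in the direction $\psi=u_t$. The computation at the start of Section \ref{S2} gives the variation of $\Delta_{\Phi}$ in direction $\psi$ as ${\rm div}(\Phi'(w)A(\nabla\psi))$. For the lower-order term,
\begin{equation*}
\frac{d}{d\varepsilon}\Big|_{\varepsilon=0}\frac p2\Phi(|\nabla(u+\varepsilon\psi)|^2)=p\Phi'(w)\langle\nabla u,\nabla\psi\rangle .
\end{equation*}
Hence $\partial_t(u_t)=\mathscr{L}(u_t)$ by the definition \eqref{eq:4.2}, which is $\Box_{\Phi}u_t=0$. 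This step needs $u$ smooth enough to commute $\partial_t$ with the spatial derivatives. That holds on the set $\{\nabla u\neq0\}$ by the regularity remark, and I would do the computation there.

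For \eqref{eq:4.7}, write $\partial_t\Phi(w)=\Phi'(w)\partial_t w=2\Phi'(w)\langle\nabla u,\nabla u_t\rangle$. Then substitute $u_t=\Lambda_{\Phi}u$ to get
\begin{equation*}
\partial_t\Phi(w)=2\Phi'(w)\langle\nabla\Lambda_{\Phi}u,\nabla u\rangle .
\end{equation*}
This is exactly the last term of \eqref{eq:4.5}. Subtracting \eqref{eq:4.5} cancels it and leaves $-2(\Phi'(w))^2(|\nabla\nabla u|_A^2+{\rm Ric}(\nabla u,\nabla u))$.

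The main obstacle is verifying \eqref{eq:4.5} itself, whose proof above was only sketched. I would check it with \eqref{eq:2.10} for $U=\Phi$, using $\mathcal{L}$ in place of $\mathscr{L}$. That gives
\begin{equation*}
\mathcal{L}\Phi(w)=2(\Phi')^2(|\nabla\nabla u|_A^2+{\rm Ric})+2\Phi'\langle\nabla\Delta_{\Phi}u,\nabla u\rangle .
\end{equation*}
Next, use $\mathscr{L}\Phi(w)=\mathcal{L}\Phi(w)-p(\Phi')^2\langle\nabla u,\nabla w\rangle$. Finally, $2\Phi'\langle\nabla(\tfrac p2\Phi(w)),\nabla u\rangle=p(\Phi')^2\langle\nabla w,\nabla u\rangle$, so $\Delta_{\Phi}$ may be replaced by $\Lambda_{\Phi}$, which gives \eqref{eq:4.5}.
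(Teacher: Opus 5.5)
Your proposal is correct and takes essentially the same route as the paper. You differentiate $u_t=\Lambda_{\Phi}u$ in time and identify $\partial_t(\Lambda_{\Phi}u)=\mathscr{L}(u_t)$; the paper does this by explicitly computing $\partial_t\Delta_{\Phi}u$ and $\partial_t\Phi(w)$. You then combine $\partial_t\Phi(w)=2\Phi'(w)\langle\nabla u,\nabla\Lambda_{\Phi}u\rangle$ with \eqref{eq:4.5}, as the paper does. Your check of \eqref{eq:4.5}, via \eqref{eq:2.10} and the identity $2\Phi'(w)\langle\nabla(\tfrac p2\Phi(w)),\nabla u\rangle=p(\Phi'(w))^2\langle\nabla w,\nabla u\rangle$, is also correct and fills in a step the paper only sketches.
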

\begin{proof}
	A direct calculation gives
	\begin{align*}
		\frac{\partial}{\partial t}\left(\Delta_{\Phi}u\right)&={\rm div}\left((\Phi'(w)\nabla u_t+\Phi''(w)w_t\nabla u)\right)\\
		&={\rm div}\left(\Phi'(w)(\nabla u_t+2h(w)\langle\nabla u,\nabla u_t\rangle\nabla u)\right)\\
		&={\rm div}\left(\Phi'(w)A(\nabla u_t)\right),
	\end{align*}
	and
	\begin{align*}
		\frac{\partial}{\partial t}\left(\Phi(w)\right)=\Phi'(w)\frac{\partial w}{\partial t}=2\Phi'(w)\langle\nabla u,\nabla u_t\rangle.
	\end{align*}
	Combining two identities with the definitions of \(\mathscr{L}\) and \(\Lambda_{\Phi}\) yields
	\[
	\Box_{\Phi}u_t=\partial_tu_t-\mathscr{L}(u_t)=\partial_t(u_t-\Lambda_{\Phi}u)=0.
	\]
	Formula  \eqref{eq:4.7} follows directly from  \eqref{eq:4.6} and \eqref{eq:4.5}.
\end{proof}
\begin{proposition}\label{corollary:H_a}
	Define the Harnack quantity
	\[
	H:=\frac{p}{2}\Phi(w)+ u_t=\Delta_{\Phi}u.
	\]
	Then,
	\begin{equation}\label{eq:4.8}
		\Box_{\Phi}H=-p\left(\Phi'(w)\right)^2\left(|\nabla\nabla u|_A^2+{\rm Ric}(\nabla u,\nabla u)\right).
	\end{equation}
\end{proposition}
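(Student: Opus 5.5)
The plan is to use linearity of \(\Box_{\Phi}\) and reduce everything to the two formulas just proved, \eqref{eq:4.6} and \eqref{eq:4.7}. The operator \(\mathscr{L}\) in \eqref{eq:4.2} is linear in \(\psi\): its coefficients depend only on \(u\) through \(w\) and \(\nabla u\), not on the argument. Hence \(\Box_{\Phi}=\partial_t-\mathscr{L}\) is also linear. First I would note that \(H=\frac{p}{2}\Phi(w)+u_t\), and that \(H=\Delta_{\Phi}u\) follows directly from \eqref{eq:4.1}, since \(u_t=\Delta_{\Phi}u-\frac{p}{2}\Phi(w)\).

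By linearity,
\[
\Box_{\Phi}H=\frac{p}{2}\Box_{\Phi}\Phi(w)+\Box_{\Phi}(u_t).
\]
The second term vanishes by \eqref{eq:4.6}. For the first term, \eqref{eq:4.7} gives \(\Box_{\Phi}\Phi(w)=-2(\Phi'(w))^2(|\nabla\nabla u|_A^2+{\rm Ric}(\nabla u,\nabla u))\). Multiplying by \(\frac{p}{2}\) yields exactly \eqref{eq:4.8}.

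The only point needing care is that \eqref{eq:4.7} was itself derived by combining \eqref{eq:4.5} with \(\partial_t\Phi(w)=2\Phi'(w)\langle\nabla u,\nabla u_t\rangle=2\Phi'(w)\langle\nabla u,\nabla\Lambda_\Phi u\rangle\). This cancels the gradient term in \(\mathscr{L}\Phi(w)\). I would briefly recheck this cancellation, since it is the main (mild) obstacle: the sign convention of the drift term \(-p\Phi'(w)\langle\nabla u,\nabla\psi\rangle\) in \(\mathscr{L}\) must match the one used in \eqref{eq:4.3}–\eqref{eq:4.5}. Everything is computed on the set \(\{w\neq0\}\), where \(u\) is smooth by the regularity remarks above.
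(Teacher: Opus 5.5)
Your proposal is correct and is exactly the argument the paper intends. The paper states the proposition without a separate proof, as the immediate consequence of the linearity of $\Box_{\Phi}$ combined with \eqref{eq:4.6} and $\frac{p}{2}$ times \eqref{eq:4.7}; your check that $\partial_t\Phi(w)=2\Phi'(w)\langle\nabla u,\nabla\Lambda_{\Phi}u\rangle$ cancels the drift term in $\mathscr{L}\Phi(w)$ is consistent with \eqref{eq:4.3}--\eqref{eq:4.5}, so nothing is missing.
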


\begin{proof}[\bf Proof of Theorem \ref{corollary:parabolic 1.1}]
	Assume that \(tH\) attains its maximum at point \((x_0,t_0)\). Using \eqref{eq:4.8}, we have
	\begin{align*}
		0&\leq\Box_{\Phi}(tH)\\
		&=H-pt\left(\Phi'(w)\right)^2\left(|\nabla\nabla u|_A^2+{\rm Ric}(\nabla u,\nabla u)\right)\\
		&\leq H-\frac{pt}{n}H^2,
	\end{align*}
	where we used the non-negativity of Ricci curvature and Cauchy-Schwarz inequality
	\begin{equation}\label{eq:trA scaling}
		\left(\Phi'(w)\right)^2|\nabla\nabla u|_A^2\geq\frac{1}{n}\left({\rm tr}_A\Phi'(w)\nabla\nabla u\right)^2=\frac{1}{n}\left(\Delta_{\Phi}u\right)^2.
	\end{equation}
	where \({\rm tr}_A\Phi'(w)\nabla\nabla u=\sum_{i,j}A^{ij}\Phi'(w)\nabla_i\nabla_ju\). Thus, we obtain the estimate \eqref{pe}.
\end{proof}

  \section{The case \({\rm dim}(M)=n=2\)}

In the proof of Theorem \ref{theorem:elliptic equation key} we used the Saloff-Coste Sobolev inequality 
with exponent $\frac{2n}{n-2}$, which requires $n>2$. 
Here we sketch the necessary modifications when $\dim M = 2$.

For $n=2$ we apply Lemma \ref{lemma:Saloff-Coste} with an auxiliary dimension $n'=4$, which yields
\begin{equation}\label{eq:sob2}
	\Bigl(\int_{B}|f|^{4}\,dV\Bigr)^{1/2}
	\le e^{c_0(1+\sqrt{\kappa}R)}\,V^{-\frac12}R^{2}
	\Bigl(\int_{B}|\nabla f|^{2}+R^{-2}f^{2}\,dV\Bigr)
\end{equation}
for every $f\in C_{0}^{\infty}(B)$.  
Replacing $f$ by $w^{\frac{\alpha+1}{2}}\eta$ and inserting this into the
basic integral inequality of Lemma~3.1 we obtain, for $\alpha\ge\alpha_{0}$,
\begin{align}
	&e^{-\alpha_{0}}V^{\frac12}\Bigl(\int_{\Omega}w^{2(\alpha+1)}\eta^{4}\,dV\Bigr)^{\frac12}
	+ a_{6}\alpha R^{2}\int_{\Omega}w^{\alpha+2}\eta^{2}\,dV \notag\\
	&\qquad \le a_{7}\Bigl(R^{2}\int_{\Omega}w^{\alpha+1}|\nabla\eta|^{2}\,dV
	+ \alpha_{0}^{2}\alpha\int_{\Omega}w^{\alpha+1}\eta^{2}\,dV\Bigr),
	\label{eq:n2ell}
\end{align}
where $\alpha_{0}=c_{1}(1+\sqrt{\kappa}R)$ as before (with a possibly larger constant $c_{1}$).  

\medskip
\noindent\textit{$L^{\alpha_{1}}$-estimate.}  
Set $\alpha=\alpha_{0}$ in \eqref{eq:n2ell} and choose $\eta$ to be a
cut‑off function supported in $B_{R}$ with $\eta\equiv1$ on $B_{3R/4}$.
Exactly the same splitting argument as in Lemma~3.2 gives
\begin{equation}\label{eq:n2Lalpha}
	\|w\|_{L^{\alpha_{1}}(B_{3R/4})}
	\le a_{12}\,V^{\frac1{\alpha_{1}}}\Bigl(\frac{1+\sqrt{\kappa}R}{R}\Bigr)^{2},
	\qquad \alpha_{1}=2(\alpha_{0}+1).
\end{equation}

\medskip
\noindent\textit{Nash--Moser iteration.}  
We set $\alpha_{k+1}=2\alpha_{k}$ ($k\ge1$), choose concentric balls
$B_{r_{k}}$ with $r_{k}=R/2+R/4^{k}$ and cut‑off functions $\eta_{k}$
adapted to them.  From \eqref{eq:n2ell} we obtain, for every $k$,
\[
\Bigl(\int_{B_{r_{k+1}}}w^{\alpha_{k+1}}\,dV\Bigr)^{\frac1{\alpha_{k+1}}}
\le \bigl(a_{7}\alpha_{0}^{2}\alpha_{1}e^{\alpha_{0}}V^{-\frac12}\bigr)^{\frac1{\alpha_{k}}}
a_{13}^{\frac{k}{\alpha_{k}}}
\Bigl(\int_{B_{r_{k}}}w^{\alpha_{k}}\,dV\Bigr)^{\frac1{\alpha_{k}}}.
\]
Because $\sum\frac1{\alpha_{k}}=\frac1{\alpha_{1}}\sum 2^{-k}= \frac2{\alpha_{1}}$
and $\sum\frac{k}{\alpha_{k}}<\infty$, the constants remain bounded as $k\to\infty$.
Iterating yields
\begin{equation}\label{eq:n2iter}
	\|w\|_{L^{\infty}(B_{R/2})}
	\le a_{14}\,V^{-\frac1{\alpha_{1}}}\|w\|_{L^{\alpha_{1}}(B_{3R/4})}.
\end{equation}
Combining \eqref{eq:n2Lalpha} and \eqref{eq:n2iter} we finally obtain
\[
\sup_{B_{R/2}}|\nabla u|
\le C_{n,\Phi,G}\Bigl(\frac{1+\sqrt{\kappa}R}{R}\Bigr).
\]
Thus Theorem \ref{theorem:elliptic equation key} holds for $n=2$ as well.  The Harnack inequality and
Liouville theorem follow in exactly the same way as for $n>2$.

Yu-Zhao Wang,\\
School of Mathematics and  Statistics,  Shanxi University, Taiyuan, 030006, Shanxi, China

\emph{E-mail:} wangyuzhao@sxu.edu.cn

Jian-Hua Hao , \\
School of Mathematics and  Statistics, Shanxi University, Taiyuan, 030006, Shanxi, China.

\emph{E-mail:haojianhua@sxu.edu.cn}

\end{document}